\newtheorem{thm}{Theorem}    
\newtheorem{lem}{Lemma}          
\newtheorem{cor}{Corollary}
\newtheorem{prop}{Proposition}
\theoremstyle{definition}
\newtheorem{defn}{Definition}    
\newtheorem{exam}{Example}
\newcommand{\inv}{^{-1}}
\newcommand{\del}{\partial}
\newcommand{\bbz}{{\mathbb{Z}}}
\newcommand{\bbq}{{\mathbb{Q}}}
\newcommand{\rank}{{\mathrm{rank}}}
\newtheorem{alg}{Algorithm}
\newcommand{\gn}[1]{G^{(#1)}}
\newcommand{\azn}[1]{\mathcal{A}^\bbz_{#1}(K)}
\newcommand{\an}[1]{\mathcal{A}_{#1}(K)}
\newcommand{\calk}{\mathcal{K}}
\newcommand{\bbk}{\mathbb{K}}
\title{On computing higher-order Alexander modules of knots}
\author{Peter D. Horn}
\address{Department of Mathematics\\Syracuse University\\215 Carnegie Building\\Syracuse, NY 13244-1150}
\email{pdhorn@syr.edu}
\urladdr{https://pdhorn.expressions.syr.edu/}
\thanks{The author was partially supported by NSF DMS-1258630.}
\begin{document}

\begin{abstract}
	Cochran defined the $n$th-order integral Alexander module of a knot in the three sphere as the first homology group of the knot's $(n+1)$th-iterated abelian cover.  The case $n=0$ gives the classical Alexander module (and polynomial).  After a localization, one can get a finitely presented module over a principal ideal domain, from which one can extract a higher-order Alexander polynomial.  We present an algorithm to compute the first-order Alexander module for any knot.  As applications, we show that these higher-order Alexander polynomials provide a better bound on the knot genus than does the classical Alexander polynomial, and that they detect mutation.  Included in this algorithm is a solution to the word problem in finitely presented $\bbz[\bbz]$-modules.
\end{abstract}

\maketitle

\section{Introduction}

Given a diagram for a knot $K$ in $S^3$, there is a well-known algorithm for producing the Wirtinger presentation of $\pi_1(S^3 \setminus K)$~\cite[Section 3.D]{Rolf}.  While the knot group is a complete knot invariant (see~\cite{Wald}), using these presentationsis an impractical tool for distinguishing knots.  Since any knot's exterior has $H_1(S^3 \setminus K ) \cong \bbz$, abelianizing the fundamental group is not at all a useful invariant for distinguising knots.  A logical next step would be to find an intermediate quotient, $Q(K)$, that is more discerning than $H_1$ and more tractable than $\pi_1$:  \[ \pi_1(S^3 \setminus K) \twoheadrightarrow Q(K) \twoheadrightarrow H_1(S^3\setminus K) \]  Or better yet, one may hope to find a module $M(K)$ on which the group $Q(K)$-acts.  With this strategy in mind, Cochran defined the higher-order Alexander modules of a knot~\cite{C:NonCom}.  The \emph{derived series of a group $G$} is defined by $\gn{0} = G$ and $\gn{n+1} = \left[\gn{n},\gn{n}\right]$, where the square brackets denote commutators.  Throughout this paper we are interested in knot groups $G = \pi_1(S^3 \setminus K)$.  Put simply in algebraic terms, the $n$th-order integral Alexander module of $K$ is $\gn{n+1}/\gn{n+2}$ as a right $\bbz\left[G/\gn{n+1}\right]$-module (the action is conjugation in the group).  For $n=0$, this is the classical Alexander module, which has a topological interpretation as the first homology of the universal abelian cover of the knot complement.  The higher-order Alexander modules we consider here are different from those defined before in Cochran-Orr-Teichner~\cite[Sections 2 and 3]{COT1}; the Cochran-Orr-Teichner modules are defined using a localization stemming from a coefficient system on a $4$-manifold in which the knot $K$ is slice.  The modules considered here only depend on the knot $K$.

Let $\azn{n}$ denote Cochran's $n$th-order integral Alexander module.  In ~\cite{C:NonCom}, Cochran constructs a localization of the coefficient ring and considers the $n$th-order (localized) Alexander module $\an{n}$.  A precise definition will follow in Section~\ref{sec:defs}.  As in the classical case, one can extract an integer-valued invariant $\delta_n(K)$, which is the degree of an ``$n$th-order Alexander polynomial.''  These degrees give lower bounds for the \emph{genus of a knot}, which is the minimal genus of all orientable surfaces in $S^3$ whose boundary is the given knot.

\begin{thm}[Theorems 7.1 and Corollary 9.2 of~\cite{C:NonCom}]\label{thm:coc}
	If $K$ is a knot in $S^3$ with non-trivial Alexander polynomial, then $\delta_0(K)\leq \delta_1(K)+1\leq \cdots \delta_n(K)+1\leq \cdots \leq 2\,\mathrm{genus}(K)$.  Furthermore, given any $n\geq 0$, there exists a knot where $\delta_n(K) < \delta_{n+1}(K)$.
\end{thm}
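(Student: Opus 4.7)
The plan is to decompose the statement into two independent assertions: the chain of inequalities ending in the genus bound, and the realization of strict inequalities by explicit examples. These require very different techniques.

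For the chain $\delta_0(K) \leq \delta_1(K)+1 \leq \cdots \leq 2\,\mathrm{genus}(K)$, the geometric input is a minimal genus Seifert surface $\Sigma$ of genus $g$ for $K$. The algebraic input is that, after a suitable Ore localization, the coefficient ring $\calk_n$ for $\an{n}$ is a (skew) principal ideal domain, so $\delta_n(K)$ is essentially the sum of the degrees of the elementary ideals of a finite presentation. My approach would be to show inductively that, at every level $n$, a symplectic basis of $H_1(\Sigma;\bbz) \cong \bbz^{2g}$ lifts to a generating set of size $2g$ for $\an{n}$ over $\calk_n$. Concretely, one cuts the exterior $S^3 \setminus \nu(K)$ along $\Sigma$ and iterates the construction in the tower of covers defining $\gn{n+1}$; the Mayer--Vietoris sequence for the infinite cyclic cover, lifted up the derived tower, shows that $\an{n}$ is generated by these $2g$ classes. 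For $n=0$ this recovers the Seifert matrix presentation and yields $\delta_0(K) \leq 2g$. For $n \geq 1$ a further reduction is possible: one of the $2g$ generators can be eliminated because the meridian acts nontrivially on $\an{n}$ (this is where the noncommutative hypothesis is actually used), giving $\delta_n(K) \leq 2g-1$, i.e.\ $\delta_n(K)+1 \leq 2g$.

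For the monotonicity step $\delta_n(K) \leq \delta_{n+1}(K)+1$ (equivalently the comparison between successive levels), I would exhibit a natural surjection, coming from the map of derived quotients $\gn{n+1}/\gn{n+2} \twoheadrightarrow \gn{n+1}/\gn{n+2}$ considered with different coefficient systems, that relates a presentation matrix of $\an{n}$ to one of $\an{n+1}$. Translating this comparison into a rank inequality over the respective skew fields gives the monotonicity. An alternative route is to apply the same Seifert surface argument simultaneously at both levels, showing the same basis witnesses both bounds.

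For the strict inequalities, I would construct examples by satellite (winding) operations. Starting from a seed knot $J$ whose $n$th-order invariant is controlled — for instance, a knot with trivial Alexander polynomial but nontrivial higher-order invariants — a satellite with carefully chosen pattern and winding number inflates the $(n{+}1)$st-order invariant without touching the $n$th-order one. The main obstacle here is to compute the relevant modules after satelliting, which requires a formula expressing $\an{n+1}$ of the satellite in terms of the $\an{n}$'s of the pattern and companion; this is the technical heart of Cochran's Section~9 examples.

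The principal technical obstacle throughout is ensuring that the rings $\bbz[G/\gn{n+1}]$, after quotienting by $\bbz$-torsion, satisfy the right Ore condition and embed into a skew field; only then is ``degree of the order'' a well-defined, additive, and subadditive quantity supporting the rank arguments above. This reduces to showing that each $G/\gn{n+1}$ is poly-torsion-free-abelian, which follows from induction on $n$ and the classical fact that $G/G'$ is torsion-free abelian for a knot group.
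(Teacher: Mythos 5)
This theorem is not proved in the paper at all: it is imported verbatim from Cochran's \emph{Noncommutative knot theory} (Theorems 7.1 and Corollary 9.2 of~\cite{C:NonCom}), so there is no in-paper argument to compare yours against. Judged as a reconstruction of Cochran's proof, your outline has the right large-scale shape --- a genus-$g$ Seifert surface giving $2g$ generators via Mayer--Vietoris up the derived tower, the PTFA/Ore machinery making $\delta_n$ a well-defined rank over a skew field, and satellite (infection) constructions for the strict inequalities --- but two steps are wrong or missing as stated.

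First, the source of the $-1$. You attribute it to ``the meridian acts nontrivially on $\an{n}$,'' but the meridian does not lie in $\pi_1(\Sigma)$ and is not one of the $2g$ surface generators you are trying to eliminate. The actual mechanism is that for $n\geq 1$ the restricted coefficient system $\pi_1(\Sigma)\to\Gamma_n$ is nontrivial (some element of $\pi_1(\Sigma)\subset \gn{1}$ survives in $\gn{1}/\gn{2}$), hence $H_0(\Sigma;\calk_n)=0$, and the rank count for a complex with one $0$-cell and $2g$ one-cells then gives $\mathrm{rank}_{\calk_n}H_1\leq 2g-1$. This is also the \emph{only} place the hypothesis $\Delta_K\neq 1$ enters ($\Delta_K=1$ forces $\gn{1}=\gn{2}$, the coefficient system on $\Sigma$ is then trivial, and the whole tower collapses to the classical case); your outline never identifies where that hypothesis is used. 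Second, the monotonicity $\delta_n\leq\delta_{n+1}$: the ``natural surjection $\gn{n+1}/\gn{n+2}\twoheadrightarrow\gn{n+1}/\gn{n+2}$'' is the identity map as written, and your fallback of running the Seifert-surface argument at both levels only yields $\delta_n\leq 2g-1$ and $\delta_{n+1}\leq 2g-1$, which says nothing about their relative size. The genuine argument compares $\mathrm{rank}_{\calk_n}$ and $\mathrm{rank}_{\calk_{n+1}}$ of the homology of a fixed finite complex under the quotient $\Gamma_{n+1}\to\Gamma_n$, a Harvey-style monotonicity statement; since the rank of a matrix can drop under a change of rings, this step needs the specific structure of the localizations and is the real technical content of the corresponding result in~\cite{C:NonCom} --- it cannot be waved through as a generic ``rank inequality.''
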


Thus, the higher $\delta_n$ give better lower bounds for the genus of a knot.  For the `$<$' part of the theorem, Cochran constructs knots by a satellite operation called `infection.'  Many positive results in the so-called `higher-order knot theory' have been proven using satellite operations, but satellite knots are not generic.  The higher-order degrees $\delta_n$ are the most alluring of all the higher-order knot invariants for two reasons: they are defined for all knots, and they `should be' algorithmically computable.  Since the $\delta_n$ were defined, several people have made successful computations by hand~\cite{LeiMax},~\cite{Holum}.  In a related line of inquiry, Harvey successfully computed degrees of higher-order Alexander polynomials of 3-manifolds~\cite[Sections 6 and 8]{Harvey:Thurston}.  Such calculations are tedious but always seem to work out with enough perseverance.

The purpose of this paper is to describe a practical algorithm to compute $\delta_1(K)$ for any knot $K$, answering question 13 of~\cite{FV:Survey}.  Using this algorithm, we compute $\delta_1$ for many low crossing knots.  All knots with ten crossings or fewer have $\delta_0 = 2\,\mathrm{genus}(K)$, and so we know $\delta_1$ by Theorem~\ref{thm:coc}.  It is easily seen from the definitions in Section~\ref{sec:defs} that if the classical Alexander polynomial of $K$ is trivial, then all the $\delta_n(K) = 0$.  The remaining knots with eleven crossings for which $0 < \delta_0 < 2\,\mathrm{genus}$ are $11_{n45}, 11_{n67}, 11_{n73}, 11_{n97}$, and $11_{n152}$.  We exhibit the utility of the algorithm to compute $\delta_1(11_{n67})$ and $\delta_1(12_{n293})$.

\begin{thm}\label{thm:11n67}
	$\delta_1(11_{n67}) = 3$ and $\delta_1(12_{n293}) = 3$.
\end{thm}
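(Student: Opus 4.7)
The plan is to apply the algorithm described in the body of the paper to the two specific knots, and to cross-check the numerical output against the inequalities of Theorem~\ref{thm:coc}. Both $11_{n67}$ and $12_{n293}$ were chosen to lie in the range $0 < \delta_0 < 2\,\mathrm{genus}(K)$, so neither the classical Alexander polynomial nor the genus alone pins down $\delta_1$; a direct module-level computation is needed.

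First I would fix a planar diagram and read off the Wirtinger presentation of $G = \p(S^3 \setminus K)$. Applying Fox calculus to this presentation gives a presentation matrix for $\gn{2}/\gn{3}$ as a right $\bbz[G/\gn{2}]$-module, i.e.\ a presentation for $\azn{1}$. Because $G/\gn{2}$ is the metabelianization of a knot group, this coefficient ring is noncommutative; after Cochran's localization it embeds in a skew Laurent polynomial ring $\bbk[t^{\pm 1}]$ over a suitable skew field $\bbk$, which is a (noncommutative) PID. This is where the solution to the word problem in $\bbz[\bbz]$-modules advertised in the abstract is essential: without it one cannot algorithmically recognize when a typical entry of the localized presentation matrix is zero, let alone reduce it.

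Next I would put the localized matrix into a diagonal form over $\bbk[t^{\pm 1}]$ by a skew-Euclidean row/column reduction, and read off $\delta_1(K)$ as the sum of the $t$-degrees of the diagonal invariants. For both $11_{n67}$ and $12_{n293}$ this should return $3$. As a sanity check, one confronts the output with $\delta_0 - 1 \leq \delta_1 \leq 2\,\mathrm{genus}(K) - 1$ coming from Theorem~\ref{thm:coc} and the tabulated $\delta_0$ and $\mathrm{genus}$ for these knots; these inequalities should be consistent with $\delta_1 = 3$ without forcing it, so the algorithm's output really does carry the information.

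The main obstacle is practical rather than conceptual: the explosion in size of the presentation. The Wirtinger presentation of an 11- or 12-crossing knot already contributes on the order of a dozen generators and relations, the Fox derivatives populate a matrix over a noncommutative group ring, and entries cannot be compared or canceled until they have been rewritten via the word-problem algorithm in the metabelian coefficient ring. Organizing this bookkeeping stably enough to carry the skew-Euclidean reduction through to a diagonal form — and verifying the degree does not depend on the sequence of row/column operations chosen — is where essentially all of the work will lie.
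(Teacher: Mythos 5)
Your proposal is essentially the paper's own route: the theorem is established by running the algorithm (Wirtinger presentation, Fox matrix, localization, Euclidean reduction over $\bbk_1[t^{\pm 1}]$, sum of diagonal degrees), with the word-problem solution for the classical Alexander module $\gn{1}/\gn{2}$ supplying the zero-recognition needed for the row operations, and with Theorem~\ref{thm:coc} as the consistency check ($\delta_0=2$, genus $2$, so $1\le\delta_1\le 3$, and the computed value $3$ saturates the upper bound).

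Two corrections worth recording. First, for $n=1$ the coefficient field $\bbk_1$ is the quotient field of $\bbz\left[\gn{1}/\gn{2}\right]$ and is \emph{commutative}; only the Laurent variable $t$ fails to commute with the coefficients in $\bbk_1[t^{\pm 1}]$. This is not a pedantic point: the paper stresses that this commutativity is exactly what makes the $n=1$ computation implementable, whereas for $n\ge 2$ the genuinely skew coefficient fields render the computation currently hopeless. Second, your identified ``main obstacle'' (the explosion in presentation size) is resolved in the paper in two concrete ways you should supply: (i) one does not diagonalize but only upper-triangularizes, which Lemma~\ref{lem:utsuffices} shows suffices to read off $\delta_1$ and which avoids many operations; and (ii) the $11$- or $12$-generator Wirtinger presentation is first reduced by Tietze transformations to a presentation with as many generators as the bridge number (three for these knots), so that the metabelian Fox matrix is only $2\times 2$ rather than $10\times 10$ or $11\times 11$. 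Without step (ii) the computation does not terminate in practice, so it is part of the proof, not merely an optimization.
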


The classical Alexander polynomial of $11_{n67}$ is $2-5t+2t^2$, and its genus is $2$.  This is the simplest known example of a knot where $\delta_1$ is a better bound on the genus than $\delta_0$.  (The only simpler possible example is $11_{n45}$).  Since $3+1 = 2\cdot 2$ (as in Theorem~\ref{thm:coc}), we conclude that $\delta_n(11_{n67})=3$ for all $n\geq 1$.  We remark that the classical Alexander module of $11_{n67}$ is cyclic; it matches that of the stevedore knot, $6_1$, which has genus one.  The calculation of $\delta_1(11_{n67})$ took 11324 seconds on a machine with an Intel i5 2.5 GHz quad-core processor.

The knot $12_{n293}$ also has genus $2$, and its classical Alexander polynomial is $2-3t+2t^2$.

For each of $11_{n67}$ and $12_{n293}$, there exists another knot in the tables with isomorphic knot Floer homology; the computation was verified by Gridlink~\cite{Culler}.  These other knots are $11_{n97}$ and $12_{n519}$, respectively.  We were unable to find Wirtinger presentations for these knots that would allow our program to compute $\delta_1$ in a reasonable amount of time, which leaves the question: does knot Floer homology detect $\delta_1$?

We recall that the classical Alexander polynomials of mutant knots are equal (in fact, the stronger HOMFLYPT polynomial does not distinguish mutants)~\cite{LM}.  Wada showed that twisted Alexander polynomials distinguishes the mutant pair consisting of the Kinoshita-Terasaka and Conway knots ($11_{n42}$ and $11_{n34}$)~\cite[Section 6]{Wada}.  Noting that the higher-order and twisted Alexander polynomials have similar definitions, Friedl and Vidussi ask in their survey paper~\cite[Question 14]{FV:Survey}:
\begin{center}
	\emph{Do higher-order Alexander polynomials detect mutation?}
\end{center}

We are able to answer:

\begin{thm}\label{thm:DetectsMutants}
	The first-order Alexander polynomial can distinguish mutant knots.  In particular, $\delta_1(12_{n23}) = 3$ and $\delta_1(12_{n31}) = 5$.
\end{thm}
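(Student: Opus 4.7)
The plan is to invoke, on each of the two knots, the algorithm developed throughout this paper, and then compare the outputs. As a preliminary step I would confirm that $12_{n23}$ and $12_{n31}$ genuinely form a mutant pair, either by citation to a standard table such as KnotInfo or by exhibiting an explicit Conway sphere in a diagram of $12_{n23}$ whose Conway mutation yields $12_{n31}$. Since the classical Alexander polynomial is invariant under mutation, the pair automatically has $\delta_0(12_{n23}) = \delta_0(12_{n31})$, which is what makes the comparison at the next level meaningful.

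With a Wirtinger presentation of $\pi_1(S^3\setminus K)$ in hand for each knot, I would feed it through the algorithm. The output is a finite presentation matrix for $\an{1}$ over the principal ideal domain obtained from the localization described in Section~\ref{sec:defs}; reducing to Smith normal form then yields invariant factors whose degrees sum to $\delta_1(K)$. Running the algorithm separately on the two knots should yield $3$ and $5$ respectively, and since these are distinct while the underlying knots are mutants, the theorem is immediate. If desired, one can cross-check the output of $\delta_1 = 5$ for $12_{n31}$ against Theorem~\ref{thm:coc}: the inequality $\delta_1(K)+1 \leq 2\,\mathrm{genus}(K)$ forces $\mathrm{genus}(12_{n31}) \geq 3$, which (by mutation invariance of genus) must also be the genus of $12_{n23}$, and the latter is consistent with the looser upper bound $\delta_1(12_{n23})+1 \leq 4 \leq 6$.

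The main obstacle is practical rather than conceptual. Each elementary operation in the reduction invokes the solution to the word problem in a finitely presented $\bbz[\bbz]$-module, which is expensive, and the ambient matrices grow rapidly in both size and entry-complexity with the crossing number. As the failed attempts on $11_{n97}$ and $12_{n519}$ mentioned earlier in the introduction illustrate, the algorithm can be correct in principle yet fail to terminate in reasonable time on a poorly chosen presentation; the real effort is therefore the search for Wirtinger presentations of $12_{n23}$ and $12_{n31}$ on which the computation runs to completion. Once it does terminate, the claimed values of $\delta_1$ are read off directly from the Smith form, and the mutation-detection conclusion follows.
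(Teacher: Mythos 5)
Your proposal is correct and follows essentially the same route as the paper: verify the mutant relationship, then run the $\delta_1$ algorithm on each knot and compare the outputs $3$ and $5$. The only differences are minor implementation details --- the paper derives its group presentations from arc (grid) diagrams rather than Wirtinger presentations precisely to make the computation tractable, and it uses upper-triangular form (Lemma~\ref{lem:utsuffices}) rather than full Smith normal form, since the divisibility condition is unnecessary for computing the degree sum.
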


The proof of this result follows from Examples~\ref{ex:12n23} and ~\ref{ex:12n31} and the discussion in Section~\ref{sec:Mutants}.

A computer program for computing $\delta_1$ is available at \url{https://pdhorn.expressions.syr.edu/software/}.

\section{Definitions}\label{sec:defs}

The definitions of higher-order Alexander modules are due to Cochran~\cite{C:NonCom}.

Let $K$ be a knot in $S^3$ and $G = \pi_1(S^3\setminus K)$.  Let $\Gamma_n = G/\gn{n+1}$, where $\gn{n+1}$ is the $(n+1)$th term of the derived series of $G$.  There is a right action of $\Gamma_n$ on $\gn{n+1}/\gn{n+2}$ by conjugation: \[ g*h = h\inv\,g\,h,\ \ \mbox{for all } h\in\Gamma_n,\ g\in \gn{n+1}/\gn{n+2} \]  Let $M_n$ denote the $(n+1)$th iterated (maximal) abelian cover of $S^3\setminus K$, so that $\pi_1(M_n) \cong \gn{n+1}$.  Then $H_1(M_n; \bbz)\cong \gn{n+1}/\gn{n+2}$, and this homology group has a right action by $\Gamma_n$ (viewed as either deck translations or conjugation).

\begin{defn}
	The \emph{$n$th-order integral Alexander module of $K$}, denoted $\azn{n}$, is the right $\bbz\Gamma_n$-module \[ \azn{n} := \gn{n+1}/\gn{n+2} \cong H_1(S^3\setminus K; \bbz\Gamma_n) \cong H_1(M_n; \bbz) \]
\end{defn}

The group $\Gamma_n$ is \emph{poly-(torsion-free) abelian} (PTFA), i.e. it admits a series of normal subgroups $\{e\} \lhd A_k \lhd A_{k-1} \lhd \cdots\lhd A_0 = \Gamma_n$ so that each $A_i / A_{i+1}$ is torsion-free abelian.  To see this, take $A_i = \gn{i}/\gn{n+1}$ and recall that $A_i/A_{i+1} = \gn{i}/\gn{i+1}$ is torsion-free abelian~\cite{Strebel}.  We will use the fact that $\bbz\Gamma_n$ imbeds in its classical quotient field (in general $\bbz\Gamma_n$ is a noncommutative ring, so its quotient field is a skew field):

\begin{prop}[Proposition 3.2 of~\cite{C:NonCom}, see also the Proposition of~\cite{Lewin}]
	If $\Gamma$ is PTFA, then $\bbq\Gamma$ (and hence $\bbz\Gamma$) is a right Ore domain, i.e. $\bbq\Gamma$ imbeds in its classical right ring of quotients $\calk$, which is a skew field.
\end{prop}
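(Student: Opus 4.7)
The plan is to prove the statement by induction on the length $k$ of a PTFA filtration $\{e\}=A_k\lhd A_{k-1}\lhd\cdots\lhd A_0=\Gamma$. The base case $k=0$ is immediate: $\bbq\Gamma=\bbq$ is already a field, which is trivially a right Ore domain with $\calk=\bbq$.

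For the inductive step, I would first apply the hypothesis to the normal subgroup $A_1$, whose inherited PTFA filtration has length $k-1$, producing a skew field $\calk_1$ into which $\bbq A_1$ embeds as a right Ore domain. A choice of set-theoretic section of $\Gamma\twoheadrightarrow\Gamma/A_1$ then identifies $\bbq\Gamma$ with a crossed product $\bbq A_1 *(\Gamma/A_1)$, where the multiplication is twisted by the conjugation action of $\Gamma/A_1$ on $\bbq A_1$ together with a $2$-cocycle. Since ring automorphisms of a right Ore domain extend uniquely to its classical ring of quotients, the crossed-product structure extends to give an embedding $\bbq\Gamma\hookrightarrow \calk_1 *(\Gamma/A_1)$.

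Next I would exploit that $\Gamma/A_1$ is torsion-free abelian. Writing $\Gamma/A_1$ as the directed union of its finitely generated subgroups $H\cong\bbz^r$, each $\calk_1 * H$ is an iterated skew Laurent polynomial ring in $r$ indeterminates over the skew field $\calk_1$. Such a ring is a right Noetherian domain, hence automatically right Ore, with its own skew field of fractions. Since the right Ore condition passes to directed unions (any two elements lie in a common finitely generated piece where Ore multipliers exist), it follows that $\calk_1 *(\Gamma/A_1)$ is itself a right Ore domain. Finally, I would descend this to $\bbq\Gamma$ by showing that $S=\bbq A_1\setminus\{0\}$ is a right Ore denominator set inside $\bbq\Gamma$, whose localization recovers $\calk_1 *(\Gamma/A_1)$; Ore-ness of $\bbq\Gamma$ then follows, with skew field of fractions $\calk$ equal to that of the crossed product.

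The main obstacle is this final descent step: right Ore-ness does not generally pass to subrings, so one cannot simply invoke the fact that $\bbq\Gamma$ sits inside a skew field. Verifying that $S=\bbq A_1\setminus\{0\}$ is a right Ore set in $\bbq\Gamma$ is the crux, and it relies essentially on the normality of $A_1$ in $\Gamma$ (so that $S$ is stable under the conjugation twist) together with the inductive Ore property of $\bbq A_1$. Once this is in hand, the chain of extensions $\bbq A_1\subset\bbq\Gamma\subset\calk$ realizes $\calk$ as the classical right ring of quotients of $\bbq\Gamma$, completing the induction.
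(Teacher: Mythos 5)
The paper does not prove this proposition; it is imported verbatim from Cochran (Proposition 3.2 of the cited paper) and Lewin, and your sketch is precisely the standard argument given there: induct on the length of the PTFA series, realize $\bbq\Gamma$ as a crossed product $\bbq A_1 * (\Gamma/A_1)$, pass to $\calk_1 * (\Gamma/A_1)$, handle the torsion-free abelian quotient via a directed union of Noetherian iterated skew Laurent extensions, and descend. Your argument is sound, and the descent step you correctly single out as the crux is exactly Lewin's lemma (normality of $A_1$ lets one conjugate denominators across transversal elements, and a finite intersection of nonzero right ideals in the Ore domain $\bbq A_1$ is nonzero, which produces the common right denominator in $\bbq A_1\setminus\{0\}$).
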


Let $\calk_n$ denote the quotient field of $\bbz\Gamma_n$.  There is an important intermediate ring $R_n$ ($\bbq\Gamma_n \subset R_n \subset \calk_n$) which will be convenient for our use.  For $n\geq 1$, the kernel of $\pi: G/\gn{n} \to G/\gn{1}\cong \bbz$ is just $\gn{1}/\gn{n}$.  After choosing a splitting $1\mapsto\mu$ of $\pi$, we get an isomorphism \[ \Gamma_n = G/\gn{n+1} \cong \gn{1}/\gn{n+1} \rtimes \bbz \]  Any element of $\Gamma_n$ has a unique expression as $\mu^i\,g$, where $g\in \gn{1}/\gn{n+1}$.  There is also a unique expression by writing powers of $\mu$ on the right: $\mu^i\,g = \widetilde{g}\,\mu^i$, where the $\widetilde{\ }$ denotes the action on $g$ by conjugating $i$ times by $\mu$.  Thus $\bbq\Gamma_n$ is canonically isomorphic (after choosing the splitting) to the skew Laurent polynomial ring $\left(\bbq\left[\gn{1}/\gn{n+1}\right]\right)\left[t^{\pm 1}\right]$.  In this skew polynomial ring, the coefficients are not commutative (unless $n=1$), and the coefficients do not commute with the variable (due to the semidirect product structure).

Let $\bbk_n$ denote the classical right (skew) field of quotients for $\bbq\left[\gn{1}/\gn{n+1}\right]$, and let \\$R_n = \bbz\Gamma_n\left(\bbz \left[\gn{1}/\gn{n+1}\right] \setminus \{0\} \right)\inv$.  After choosing a splitting $1 \mapsto \mu$, there is a canonical isomorphism $R_n \xrightarrow{\cong} \bbk_n\left[t^{\pm 1}\right]$.  Since $R_n$ is a localization of $\bbz\Gamma_n$ wherein some but not all elements are inverted, we have that $\bbz\Gamma_n \subset R_n\subset \calk_n$.

\begin{defn}
	The \emph{$n$th-order (localized) Alexander module of $K$} is $\an{n} = H_1(S^3 \setminus K; R_n)$.
\end{defn}

We now summarize several useful facts about $R_n$ and $\calk_n$:

\begin{thm}[Section 4 of ~\cite{C:NonCom}]\label{thm:facts}
	Suppose we have chosen a meridian (i.e. splitting) $1\mapsto \mu$ so that $R_n\cong\bbk_n\left[t^{\pm 1}\right]$. Then
	\begin{enumerate}
		\item $R_n$ is a right (and left) PID,
		\item $\bbk_n\left[t^{\pm 1}\right]$ has a well-defined degree function and a Euclidean algorithm,
		\item $\an{n}$ is a finitely-generated torsion module over $R_n$,
		\item $R_n$ is a flat left $\bbz\Gamma_n$-module, so that $\displaystyle \an{n}\cong \azn{n}\otimes_{\bbz\Gamma_n} R_n$, 
		\item $\calk_n$ is a flat $R_n$ module so $\an{n}\otimes_{R_n} \calk_n = H_1(S^3\setminus K; \calk_n)$
	\end{enumerate}
\end{thm}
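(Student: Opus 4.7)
The plan is to treat the five items in the natural order, leveraging the explicit identification $R_n\cong \bbk_n[t^{\pm 1}]$ established in the paragraph preceding the statement.

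I would prove (2) first, since it feeds directly into (1). Writing $\deg(f)$ as the span between highest and lowest powers of $t$ appearing in $f\in\bbk_n[t^{\pm 1}]$, one checks this is well defined because $\bbk_n$ is a domain (so the top coefficient of a product does not collapse). Given $f,g$ with $g\neq 0$, after multiplying by an appropriate power of $t$ we may assume both lie in $\bbk_n[t]$. The leading coefficient of $g$ lies in the skew field $\bbk_n$, hence is invertible, so we can mimic the usual long-division argument one term at a time, using the semidirect-product structure to slide powers of $t$ past elements of $\bbk_n$. This gives a Euclidean algorithm on both sides. Then (1) is standard: in any nonzero one-sided ideal, pick an element of minimal degree and run the Euclidean algorithm to show every other element is a multiple of it.

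Items (4) and (5) are applications of the Ore localization machinery. The ring $R_n$ is by construction $\bbz\Gamma_n$ with the Ore set $\bbz[\gn{1}/\gn{n+1}]\setminus\{0\}$ inverted, and $\calk_n$ is the full Ore localization of $R_n$ (an Ore set because $R_n$ is a PID by (1)). Ore localizations are always flat on the side they are built on, so both $R_n$ is flat over $\bbz\Gamma_n$ and $\calk_n$ is flat over $R_n$. Flatness then gives, on the level of chain complexes associated to a CW structure on $S^3\setminus K$ lifted to the appropriate cover, the isomorphisms claimed at the ends of (4) and (5).

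The substantive work is (3). For finite generation, $S^3\setminus K$ is homotopy equivalent to a finite 2-complex (e.g.\ dual to a diagram), so $C_*(S^3\setminus K;R_n)$ is a bounded complex of finitely generated free $R_n$-modules, and $\an{n}=H_1$ is a subquotient of one of these. For the torsion claim, I would pass to $\calk_n$ coefficients via (5) and run an Euler characteristic argument: since $\calk_n$ is a skew field, the $\calk_n$-dimensions $b_i$ of $H_i(S^3\setminus K;\calk_n)$ satisfy $b_0-b_1+b_2=\chi(S^3\setminus K)=0$. One has $H_0(S^3\setminus K;\calk_n)=\calk_n/\langle \gamma-1 : \gamma\in\Gamma_n\rangle=0$ because $\Gamma_n$ is nontrivial (it contains the image of the meridian) and $\calk_n$ is a skew field. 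A similar computation on the boundary torus together with Poincar\'e--Lefschetz duality for the pair $(S^3\setminus K,\partial)$ forces $H_2(S^3\setminus K;\calk_n)=0$. Hence $b_1=0$, so $\an{n}\otimes_{R_n}\calk_n=0$, which by flatness of $\calk_n$ over the PID $R_n$ is equivalent to $\an{n}$ being torsion.

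The main obstacle is the torsion portion of (3): the PTFA hypothesis, the embedding $\bbz\Gamma_n\hookrightarrow\calk_n$ into a skew field, and the dimension-counting over a noncommutative coefficient ring must all be carefully combined to verify the vanishing of $H_0$ and $H_2$. Everything else reduces to invoking the correct black-box facts about Ore localizations and skew Laurent polynomial rings over skew fields.
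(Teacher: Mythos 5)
The paper does not actually prove this theorem: it is quoted verbatim from Section 4 of~\cite{C:NonCom}, so the only meaningful comparison is with the arguments in that reference. Your handling of items (1), (2), (4), (5), and the finite-generation half of (3) is correct and is essentially the standard treatment there: twisted long division in $\bbk_n\left[t^{\pm 1}\right]$ using invertibility of leading coefficients in the skew field $\bbk_n$ (with the automorphism induced by conjugation by $\mu$), the resulting one-sided PID structure, flatness of Ore localizations, and Noetherianity of $R_n$ applied to a finite CW structure on the knot exterior.

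There is, however, a genuine gap in the torsion half of (3). Writing $X=S^3\setminus K$ and $b_i=\dim_{\calk_n}H_i(X;\calk_n)$, your Euler characteristic count with $b_0=b_3=0$ gives only $b_1=b_2$. Your second ingredient --- ``a similar computation on the boundary torus together with Poincar\'e--Lefschetz duality forces $H_2=0$'' --- does not deliver that conclusion: since $H_*(\partial X;\calk_n)=0$, duality and universal coefficients over the skew field give $H_2(X;\calk_n)\cong H_2(X,\partial X;\calk_n)\cong H^1(X;\calk_n)\cong\mathrm{Hom}_{\calk_n}(H_1(X;\calk_n),\calk_n)$, i.e.\ $b_2=b_1$ again. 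You have derived the same equation twice, and no combination of these yields $b_1=0$; the argument is circular. The vanishing of $H_1(X;\calk_n)$ needs an independent input, namely the bound $\mathrm{rank}_{\calk}H_1(X;\calk)\leq\beta_1(X)-1$ for any nontrivial PTFA coefficient system (Proposition 3.10 of~\cite{C:NonCom}, following Proposition 2.11 of~\cite{COT1}), which is proved by a direct rank computation on the relative module $H_1(X,x_0;\calk)$ (via the augmentation ideal, equivalently the Fox matrix and the fundamental identity, as in Example~\ref{ex:tref}) rather than by duality; with $\beta_1(S^3\setminus K)=1$ this gives $H_1(X;\calk_n)=0$, and then your final step ($\an{n}\otimes_{R_n}\calk_n=0$ iff $\an{n}$ is torsion) closes the proof. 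You should replace the duality step with this rank bound or an equivalent argument.
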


The structure theorem for finitely-generated modules over commutative PIDs generalizes to the noncommutative case~\cite{Cohn}: if $M$ is a finitely generated torsion right $R$ module, where $R$ is a PID, then $M\cong R/e_1\,R \oplus \cdots \oplus R/e_k\,R$ where $e_i$ is a total divisor of $e_{i+1}$ (moreover, this determines the $e_i$ up to similarity).

If one views the $e_i$ as elements of $\bbk_n\left[t^{\pm 1}\right]$, one could define the higher-order Alexander polynomials of $K$ as the product of the $e_i$.  However, this is not a well-defined invariant of $K$, as the isomorphism $R_n\cong \bbk_n\left[t^{\pm 1}\right]$ depends on a choice of meridian.  However, the degree of this polynomial is a knot invariant.

\begin{defn}[Definition 5.3 of ~\cite{C:NonCom}]\label{def:delta}
	For any knot $K$ and any $n\geq 0$, the \emph{degree of the $n$th order Alexander polynomial}, denoted $\delta_n(K)$, has several equivalent definitions:
	\begin{enumerate}
		\item the sum of the degrees of the $e_i \in R_n \cong \bbk_n\left[t^{\pm 1}\right]$,
		\item the rank of $\an{n}$ as a module over $\bbk_n$,
		\item the rank of $\gn{n+1}/\gn{n+2}\otimes_{\bbz\Gamma_n} R_n$ as a module over $\bbz\left[\gn{1}/\gn{n+1}\right]$, or
		\item the rank of $\gn{n+1}/\gn{n+2}$ as a module over $\bbz\left[\gn{1}/\gn{n+1}\right]$
	\end{enumerate}
\end{defn}

\begin{exam}
	If $n=0$, $\Gamma_0 = G/\gn{1} \cong \bbz$.  In this case $\bbk_0 = \bbq$, and $R_0 \cong \bbq[t^{\pm 1}]$.  Then $\azn{0}$ is the classical Alexander module $\gn{1}/\gn{2}$ over $\bbz[t^{\pm 1}]$, and $\an{0}$ is the rational Alexander module $\an{0} = \gn{1}/\gn{2}\otimes_{\bbz[t^{\pm 1}]} \bbq[t^{\pm 1}]$.
\end{exam}

\begin{exam}
	Consider the case $n=1$, which is the main focus of this paper.  The group $\Gamma_1 = G/\gn{2}$ fits into an exact sequence \[ 0 \to \gn{1}/\gn{2} \to \Gamma_1 \to \bbz \to 0 \]  After choosing a preferred meridian, which will be identified with $t$, we see that any element of $\Gamma_1$ can be written uniquely as $t^i\,g$, where $g\in \gn{1}/\gn{2}$.  In this case, $\bbk_1$ is the quotient field of $\bbz\left[\gn{1}/\gn{2}\right]$, which is a commutative field!  Addition and multiplication in this field work -- at least on a symbolic level -- as they do in $\bbq$, but one must be especially careful not to divide by zero.  Section~\ref{sec:wp} addresses this concern.  While $\bbk_1$ is commutative, the coefficients do not commute with the variable in $R_1 \cong \bbk_1[t^{\pm 1}]$.  For example, $t\, g = \mu g\mu\inv t$ in the polynomial ring.
\end{exam}

\subsection{Outline of the algorithm to compute $\delta_1$}\label{sec:quickoutline}

To compute $\delta_1(K)$, we will take a presentation matrix $M$ for $\an{1}$ as a module over $\bbk_1[t^{\pm 1}]$ and make it upper-triangular.  The sum of the degrees of the diagonal entries is $\delta_1(K)$.  Note that we do not need the divisibility condition on the diagonal entries as in the structure theorem (cf. Lemma~\ref{lem:utsuffices}).  The triangularization process requires only row operations (and some column swaps), which can be achieved by left-multiplication by elementary matrices over $\bbk_1[t^{\pm 1}]$.  The algorithm to upper-triangularize a matrix over $\bbk_1[t^{\pm 1}]$ is the same as over $\bbq[t^{\pm 1}]$.  First, multiply each row of $M$ by a suitable power of $t$ to make all entries \emph{honest} polynomials.  Pick the smallest degree entry and move it to the top-left entry by row and column swaps.  Use row operations to clear the first column; it may be necessary to perform a row swap if the degree of an entry in the $k,1$-position (where $k>1$) falls below the degree of the $1,1$-entry.  By doing these row operations, the matrix will eventually have a non-zero entry in $1,1$ and all zeros below that; this follows from items 2 and 3 of Theorem~\ref{thm:facts}.  The algorithm continues by moving on to the submatrix gotten by removing the first row and column.

The difficulty with implementing this algorithm lies in the coefficient field $\bbk_1$, which consists of formal quotients of formal $\bbz$-linear combinations of elements in the classical Alexander module, $\gn{1}/\gn{2}$, the group operation for which we will denote additively in this paragraph.  While this is a commutative field, arithmetic is lengthy.  For example, the sum \[ \frac{m\,a+n\,b}{o\,c} + \frac{p\,d}{q\,e+r\,f} = \frac{mq(a+e) + mr(a+f) + nq(b+e) + nr(n+f) + op(c+d) }{ oq(c+e) + or(e+f) } \] seems to have 5 terms in its numerator and two terms in its denominator.  But do any of the terms cancel?  Is the numerator zero?  Logically, the `collapsing problem' in the group ring $\bbz H$ is equivalent to the word problem in the group $H$.  However in practice, one would prefer to have a `normal form' for group elements in $H$ to speed up the ring operations in $\bbz H$.  This will be discussed further in Section~\ref{sec:wp}.

While $\bbk_1$ is a commutative field, its arithmetic seems to push the boundaries of the abilities of modern computers.    It seems to the author that an implementation to compute $\delta_n$ for $n\geq 2$ is hopeless with the current technology, since the fields $\bbk_n$ with $n\geq 2$ are noncommutative.

\section{Previous ad hoc computations}

For knots with very few crossings (five or fewer), one can calculate $\delta_1$ by hand.  Such a calculation is straightforward for two reasons: the size of the matrix you must work with is manageable, and the coefficient field $\bbk_1$ is simple, due to the simple structure of $\mathcal{A}_0$ for low crossing knots.  We present two examples to illustrate both the methods involved in the general algorithm and why previous $\delta_1$ computations have failed to give a general algorithm.  Compare the strategy in the next example with~\cite[Example 4.2]{Holum}.

\begin{exam}\label{ex:tref}
	Let $T$ denote the trefoil knot.  Using the standard torus knot diagram, one can calculate the Wirtinger presentation \[ G:= \pi_1\left( S^3 \setminus T \right) \cong \left\langle x_1, x_2, x_3|\ x_1x_3x_1\inv x_2\inv,\ x_1x_3x_2\inv x_3\inv \right\rangle \]  For reasons that will soon become clear, change the generating set to $a := x_1$, $y_1 := x_2x_1\inv$, $y_2 := x_3x_1\inv$.  Note that under the abelianization map, $a$ maps to a generator of $\bbz$ while $y_1$ and $y_2$ map to the identity.  We now have the presentation \[ \pi_1\left( S^3 \setminus T \right) \cong \left\langle a, y_1, y_2|\ ay_2a\inv y_1\inv, \ ay_2y_1\inv a\inv y_2\inv \right\rangle \]  We construct the matrix of Fox derivatives~\cite{Fox} whose $i,j$-entry is the derivative of the $i$th relation with respect to the $j$th generator; the Fox calculus is defined by \begin{itemize}
		\item $\frac{\del g_j}{\del g_i} = \delta_{ij}$, for each $i,j$
		\item $\frac{\del 1}{\del g_i} = 0$, for each $i$
		\item $\frac{\del uv}{\del{g_i}} = \frac{\del u}{\del g_i} + u\frac{\del v}{\del g_i}$, for each $i$ and for each $u,v$ in the group
	\end{itemize}
	where the $g_i$ are the generators of the group.  It follows that $\frac{\del u\inv}{\del g_i} = -u\inv\frac{\del u}{\del g_i}$ for all $u$ in the group.  The Fox matrix for the second presentation is \[\begin{pmatrix}
		1 - ay_2 a\inv & -1 & a\\
		1 - ay_2y_1\inv a\inv & -ay_2y_1\inv &a-1
	\end{pmatrix}\]  The entries of this matrix lie in the group ring $\bbz \pi_1$.  Upon writing the group elements in any quotient group $G$, one obtains a presentation matrix for $H_1\left(S^3 \setminus T,\  x_0;\  \bbz G\right)$, the first homology relative to a basepoint of the covering space whose group of covering transformations is $G$.
	
	Let us write the Fox matrix `over $\bbz$' by abelianizing the group elements: \[ \begin{pmatrix}
		0 & -1 & t \\
		0 & -t & t-1
	\end{pmatrix} \]  The first column has all zeros by the choice of generators $a, y_1, y_2$.  Recall that the Fox matrix is a presentation matrix for the first homology of a covering space relative to a basepoint; this, in effect, adds a free generator to $H_1\left(S^3 \setminus T; \bbz\left[t^{\pm 1}\right]\right)$, since in the long exact sequence for the pair $(S^3\setminus T, x_0)$, we have

	\[
	\begin{diagram}
	\node{0} \arrow{e}
		\node{ H_1\left(S^3 \setminus T; \bbz\left[t^{\pm 1}\right]\right) } \arrow{e,t}{} \arrow{s,b}{\cong}
	   \node{ H_1\left(S^3 \setminus T, x_0; \bbz\left[t^{\pm 1}\right]\right) } \arrow{s,r}{\cong} \arrow{e,t}{}
		\node{ H_0\left( x_0; \bbz\left[t^{\pm 1}\right]\right) } \arrow{s,b}{\cong}
	\\
	\node[2]{\mathrm{torsion}} \node{ \mathrm{free}\oplus\mathrm{torsion} } \node{\bbz\left[t^{\pm 1}\right]}
	\end{diagram}
	\]
	
	By exactness, the free part of $H_1\left(S^3 \setminus T, x_0; \bbz\left[t^{\pm 1}\right]\right)$ has rank one.
	
The free generator in this case is the one corresponding to the first column, and upon eliminating that column, we end up with a presentation for $H_1\left(S^3 \setminus T; \bbz \left[t^{\pm 1}\right]\right)$, i.e. the classical Alexander module of $T$: 	\[ \begin{pmatrix}
			-1 & t \\
			-t & t-1
		\end{pmatrix} \]  By the definition of $y_1,y_2$, $y_1,y_2$ may be viewed as generators of the classical Alexander module (they lie in the commutator subgroup), and the first and second columns of the above matrix correspond to these generators.  In other words, $\mathcal{A}^\bbz_0(T)$ is generated as a $\bbz\left[t^{\pm 1}\right]$-module by $y_1$ and $y_2$ with the relations $-y_1 + t*y_2 = 0$ and $-t*y_1 + (t-1)*y_2 = 0$.  One can actually `upper-triangularize' this presentation matrix (using only row operations, which amounts to adding multiples of the relations together) to: \[ \begin{pmatrix}
			1 & -t\\
			0 & t^2 - t + 1
		\end{pmatrix}\]  Thus $y_1 = t*y_2$ and $(t^2-t+1)*y_2 =0$ \emph{in the Alexander module} $\mathcal{A}^\bbz_0(T)$.  This description of $\mathcal{A}^\bbz_0(T)$ will allow us to understand the coefficients in $\bbk_1$.
		
		To compute $\delta_1(T)$, we need a presentation matrix for $\mathcal{A}_1(T) = H_1\left(S^3\setminus T; \mathcal{R}_1\right)$.  Since $\bbz\Gamma_1 \subset \mathcal{R}_1$, and $\Gamma_1$ is a quotient of $\pi_1$, such a presentation matrix can be derived from the Fox matrix above.  First, reduce the elements in the Fox matrix according to the quotient map $\pi_1\left(S^3 \setminus T \right) \twoheadrightarrow G/G^{(2)}=\Gamma_1$.  Recall from Section~\ref{sec:defs} that any element in $\Gamma_1$ has a unique expression (using the semidirect product) once we have chosen a meridian.  We have already chosen a distinguished meridian: $a$.  Let us write the Fox matrix with entries written in $G/\gn{2}\cong \gn{1}/\gn{2} \rtimes \bbz$, with $t$ generating $\bbz$ (we use $a$ when it conjugates an element of $\gn{1}/\gn{2}$, and $t$ when there is no conjugation): \[ \begin{pmatrix}
			1 - ay_2a\inv & -1 & t\\
			1 - ay_2a\inv ay_1\inv a\inv & -t\, y_2y_1\inv & t-1
		\end{pmatrix} \]  Again, this matrix presents $H_1\left(S^3 \setminus T,\  x_0;\  \bbz \Gamma_1\right)$.  Since $R_1$ is a flat $\bbz\Gamma_1$ module, so this matrix also presents $H_1\left(S^3 \setminus T,\  x_0;\  R_1\right)$ since $\bbz\Gamma_1 \hookrightarrow R_1$.  To get a presentation matrix for $H_1\left(S^3 \setminus T;\  R_1\right)$, we must eliminate one column as we did above for $\bbz\left[t^{\pm 1}\right]$.  Let $r_i$ denote the $i$th relation and $x_j$ the $j$th generator in the presentation for $G$.  By the fundamental formula for Fox derivatives~\cite[eq 2.3]{Fox}, \begin{equation}\label{eq:foxidentity}
			\frac{\del r_i}{\del x_j}\left( x_j -1 \right) = \sum_{j'\neq j} \frac{\del r_i}{\del x_{j'}}\left( x_{j'} -1 \right) 
		\end{equation} This implies that the $j$th column of the Fox matrix is a $R_1$-linear combination of the other columns as long as $x_{j} \in \gn{1}$ and $x_{j} \neq 1 \in \bbz \Gamma_1$.  As long as we can find a generator $y_i \in \gn{1}$ which is not in $\gn{2}$, we can (perform column operations according to Equation~\ref{eq:foxidentity} until the column corresponding to $y_i$ has all zeros and) delete the column corresponding to $y_i$.  This results in a presentation matrix for $H_1(S^3 \setminus T; R_1)$, since $\rank_{R_1} H_1\left(S^3 \setminus T, x_0; R_1 \right) = 1$, which we will justify now.  The existence of the column of zeroes establishes that $\rank_{R_1} H_1\left(S^3 \setminus T, x_0; R_1 \right) > 0$.  One may choose a splitting $H_1\left(S^3 \setminus T, x_0; R_1 \right) \cong R_1^d \oplus \mathrm{torsion}$ and consider the map $(R_1)^d \to H_0(x_0; R_1) \cong R_1$ induced by the right most map in the following portion of the long exact sequence for the pair $(S^3 \setminus T, x_0)$:

		\[
		\begin{diagram}
		\node{0} \arrow{e}
			\node{ H_1\left(S^3 \setminus T; R_1 \right) } \arrow{e,t}{} \arrow{s,b}{\cong}
		   \node{ H_1\left(S^3 \setminus T, x_0; R_1 \right) } \arrow{s,r}{\cong} \arrow{e,t}{}
			\node{ H_0\left( x_0; R_1 \right) } \arrow{s,b}{\cong}
		\\
		\node[2]{\mathrm{torsion}} \node{ \mathrm{free}\oplus\mathrm{torsion} } \node{R_1}
		\end{diagram}
		\] 
The map $(R_1)^d \to R_1$ is injective by the exactness of the sequence above.  That $d=1$ now follows from the rank-nullity theorem over a PID.
		
The upper-left triangular presentation matrix for $\mathcal{A}_0^\bbz(T)$ guarantees that $y_2 \neq 1 \in \gn{1}/\gn{2}$.  Thus, we remove the last column of the Fox matrix over $\bbz \Gamma_1$ (which corresponds to $y_2$) to obtain a square matrix, which we will call the \emph{metabelian Fox matrix}: \begin{equation}
			\begin{pmatrix}\label{eq:trefmat}
				1 - ay_2a\inv & -1 \\
				1 - ay_2a\inv ay_1\inv a\inv & -t\, y_2y_1\inv
			\end{pmatrix}
		\end{equation}
		
		The reader may have noticed that the second column may also be deleted (the element $y_1$ is nontrivial in $\gn{1}/\gn{2}$).  We chose the last column because fewer row operations are required to upper-triangularize matrix~\ref{eq:trefmat}.
		
		Since $\bbz\Gamma_1 \subset \mathcal{R}_1 \cong \bbk_1\left[t^{\pm 1}\right]$, matrix~\ref{eq:trefmat} is a presentation matrix for $\mathcal{A}_1(T)$, as discussed above.  We choose in our algorithm to use only row operations at this point because they are achieved by multiplications on the left by elementary matrices over $\bbk_1\left[t^{\pm 1}\right]$.  Column operations are equally valid but require the extra mental effort of remembering to do multiplication \emph{on the right} in the noncommutative ring $\bbk_1\left[t^{\pm 1}\right]$!
		
		We add $-t\ y_2y_1\inv$ times the first row to the second and switch columns: \[ \begin{pmatrix}
			-1 & 1-ay_2a\inv\\
			0& t\ \left(ay_2a\inv y_2y_1\inv - y_2y_1\inv\right) + 1 - ay_2a\inv ay_1\inv a\inv
		\end{pmatrix} \]
		
		At this point, adding the degrees of the diagonal polynomials gives $\delta_1(T)$ by Lemma~\ref{lem:utsuffices}, but we must be careful.  The $-1$ has degree zero.  Linear-looking polynomials such as the bottom-right entry do not necessarily have degree one; for example, $tx + y$ will have degree $1$ if and only if $y\neq 0$ and $x\neq 0 \in \bbk_1$.  Recall that $\bbk_1$ is the quotient field of $\bbz\left[\gn{1}/\gn{2}\right]$.  We can determine whether the coefficents are zero using the presentation matrix for $\mathcal{A}_0^\bbz(T)$.  The $t$-coefficient $ay_2a\inv y_2y_1\inv - y_2y_1\inv$ is zero if and only if $ay_2a\inv ay_1\inv a\inv = 1$ in $\gn{1}/\gn{2}$.  This equality in the group can be translated into an equality in the $\bbz\left[t^{\pm1}\right]$-module $\mathcal{A}_0^\bbz(T)$: $t*y_2 - t*y_1 = 0$?  Since $y_1 \neq y_2$ in $\mathcal{A}_0^\bbz(T)$, the $t$-coefficient in the linear-looking polynomial is nonzero.  We leave it to the reader to verify that the constant term is also nonzero.
		
		We compute $\delta_1(T) = \deg(-1) + \deg(t\ \left(ay_2a\inv y_2y_1\inv - y_2y_1\inv\right) + 1 - ay_2a\inv ay_1\inv a\inv) = 0 + 1 = 1$.
\end{exam}

Few choices were made in the computation of $\delta_1(T)$ (choice of preferred meridian, which column to delete).  These choices did not affect the end result, and one can easily program a computer how to make these choices.  The burden of the computation lies in putting the metabelian Fox matrix in upper-triangular form.  In general, many row (and perhaps column operations) are needed, and the $\bbk_1$-coefficients quickly become unruly.  While Example~\ref{ex:tref} was fairly simple, an outline for the general algorithm to compute $\delta_1$ can be gleaned:

\begin{alg}\label{alg:outline}
	\ 
	\begin{enumerate}
		\item Input: a diagram for a knot $K$.
		\item Compute a Wirtinger presentation for $\pi_1(S^3 \setminus K)$ and change the generators.
		\item Compute the general Fox matrix and abelian Fox matrix.
		\item Be able to recognize $0$ in the ring $\bbz\Gamma_1$ so row operations over $\bbk_1\left[t^{\pm1}\right]$ are possible.
		\item Decide which column(s) of the Fox matrix can be deleted, and delete one of them to arrive at the metabelian Fox Matrix.
		\item Put the metabelian Fox matrix in upper-triangular form using valid row operations over $\bbk_1\left[t^{\pm1}\right]$.
		\item Compute and add the degrees of the polynomials on the diagonal, giving $\delta_1(K)$.
	\end{enumerate}
\end{alg}

\begin{proof}
	We will describe an algorithm for each step and a lemma pertaining to steps 6 and 7.  Recall that Definition~\ref{def:delta} item 1 requires a \emph{diagonal} matrix that presents $\an{1}$ wherein the diagonal entries are total divisors of the subsequent diagonals.  The divisibility criterion is immaterial in the computation of the degrees of the diagonal polynomials.  Lemma~\ref{lem:utsuffices} guarantees that an upper-triangular, rather than diagonal, form of the presentation matrix suffices, which will save considerable time by avoiding many tedious row and column operations.
	
	Steps 1, 2 and 3 are straightfoward.  We use in our implementation a script written by an REU group at Columbia University that converts a knot diagram (PD Code) to a Wirtinger presentation~\cite{REU}.  Step 4 is the most difficult.  One needs to understand $\bbz\left[ \gn{1}/\gn{2}\right]$ well enough to know when an element is zero or nonzero; outside of that, addition, multiplication and inversion in $\bbk_1$ behave like the same operations in $\bbq$.  The proofs of Theorem~\ref{thm:swp} given in Sections~\ref{sec:ram},~\ref{sec:sw}, and~\ref{sec:gb} settle step 4.  Step 5 is then simple as in Example~\ref{ex:tref}: the column corresponding to one of the generators $y_i\in \gn{1}$ may be deleted if and only if $y_i \neq 0$ in $\gn{1}/\gn{2}$. Step 6 is settled by an adaptation to $\bbk_1\left[t^{\pm1}\right]$ of the algorithm to compute the canonical form of a module over a PID (cf. Section~\ref{sec:quickoutline}), although for the purpose of computing $\delta_1$, one only needs upper triangular and does not need the divisibility criterion as discussed above.  Finally, step 7 is straightforward (we use the degree function for Laurent polynomials, so $\deg(t + t^{-2}) = 3$ and $\deg(t^2 + t) = 1$).
	
\end{proof}

Several groups of undergraduates have attempted to implement an algorithm to compute $\delta_1$, and the sticking point tends to be step 4.  A strategy that works for many low crossing knots is to compute the abelian Fox matrix, which is a presentation for $\azn{0} \cong \gn{1}/\gn{2}$, and \emph{put it in upper triangular form}.  This works for the trefoil in Example~\ref{ex:tref}.  The problem is that `upper-triangularizing the abelian Fox matrix' is impossible in general, as we see now.

\begin{exam}\label{ex:nut}
	Let $K$ denote the knot in Figure~\ref{fig:nut}.  One may compute via Fox calculus or from a Seifert surface a presentation for $\azn{0}$ to be \[ P = \begin{pmatrix}
		3 - 3t & 2 - t\\
		1 - 2t & 3 - 3t
	\end{pmatrix} \]  The Alexander polynomial of $K$ is $7t^2 - 13t + 7$, an irreducible polynomial over $\bbz\left[t^{\pm 1}\right]$.
	
	$P$ can be put in upper-triangular form over $\bbq\left[t^{\pm1}\right]$, which is a PID, but this is impossible over $\bbz\left[t^{\pm1}\right]$, for suppose \[ P \sim \begin{pmatrix}
		a&b\\
		0&c
	\end{pmatrix} \] where $\sim$ denotes a sequence of invertible row and column operations over $\bbz\left[t^{\pm 1}\right]$. We may assume without loss of generality that $a = 1$, since $7t^2-13t+7$ is irreducible.
	
	Recall that the first elementary ideal of a module with an $r\times r$ presentation matrix is the ideal $\mathcal{E}_1$ generated by all $r-1 \times r-1$ minors of the matrix.  For a $2\times 2$ matrix such as $P$, $\mathcal{E}_1$ is the ideal generated by all of its entries.  This ideal is invariant under the operations $\sim$.  For our $P$, $\mathcal{E}_1 = \langle 2-t, 1-2t\rangle$, since $3-3t$ is the sum of the two off-diagonal entries.  If $P$ were upper-triangularizable as above, then $\mathcal{E}_1 = \bbz\left[t^{\pm 1}\right]$.  Then we could write some power of $t$ as $t^m = p(t)(t-2) + q(t)(2t -1)$ for some $p(t), q(t) \in \bbz[t]$.  Plugging in $t = 2$, we see that $2^m = q(2)*3$, but $2^m$ is not divisible by $3$, a contradiction.
	
	\begin{figure}[!ht]
		\begin{center}
			\begin{overpic}[scale=.8]{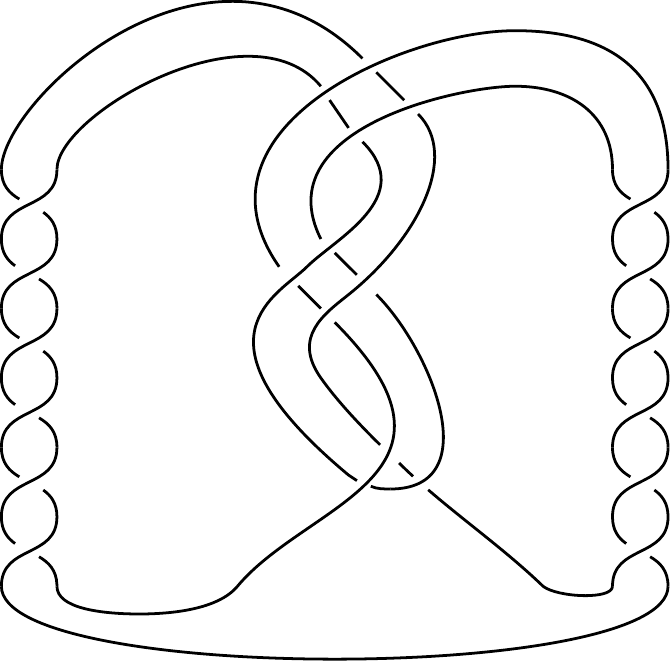}

			\end{overpic}
			\caption{A knot with non-upper-triangularizable abelian Fox matrix}
			\label{fig:nut}
		\end{center}
	\end{figure}
\end{exam}

We now present the lemma which is crucial in the proof of Algorithm~\ref{alg:outline}.

\begin{lem}\label{lem:utsuffices}
	Suppose $P$ is a square presentation matrix for $\an{n}$, or more generally for any torsion $\bbk_n\left[t^{\pm 1}\right]$-module.  If $P$ is upper-triangular with diagonal entries $d_1,\ldots,d_k$, then \[ \sum_{i=1}^k \deg d_i = \mathrm{rank}_{\bbk_n} \an{n} = \delta_n(K) \]
\end{lem}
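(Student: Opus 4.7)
The plan is to induct on $k$ and peel off the first row and column of $P$ at each step to produce a short exact sequence whose ends have computable $\bbk_n$-rank, then conclude by additivity of dimension. The identification $\rank_{\bbk_n} \an{n} = \delta_n(K)$ is exactly Definition~\ref{def:delta}(2), so the substantive content is proving $\sum_{i=1}^k \deg d_i = \rank_{\bbk_n} M$ where $M = \mathrm{coker}(P)$.

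Preliminarily I would verify that each $d_i$ is nonzero: if $d_i = 0$, then in the quotient of $M$ by $\langle e_{i+1}, \ldots, e_k\rangle$ the generator $e_i$ would satisfy no torsion relation and generate a free summand, contradicting that $M$ is torsion. For the base case $k=1$, $M \cong \bbk_n[t^{\pm 1}]/d_1\bbk_n[t^{\pm 1}]$; after multiplying $d_1$ by a suitable power of $t$ (which is a unit) I may assume $d_1$ has invertible constant term, and Theorem~\ref{thm:facts}(2) then shows $\{1, t, \ldots, t^{\deg d_1 - 1}\}$ is a $\bbk_n$-basis of $M$, giving $\rank_{\bbk_n} M = \deg d_1$.

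For the inductive step, let $e_1, \ldots, e_k$ denote the images in $M$ of the standard generators and set $N = \langle e_2, \ldots, e_k \rangle \subseteq M$. In $M/N$, the generators $e_2, \ldots, e_k$ die and rows $2, \ldots, k$ of $P$ collapse to $0=0$, while row $1$ yields $d_1 e_1 = 0$; thus $M/N \cong \bbk_n[t^{\pm 1}]/d_1 \bbk_n[t^{\pm 1}]$ has rank $\deg d_1$ by the base case. The key step is identifying a presentation for $N$: any relation $\sum_{i \geq 2} e_i c_i = 0$ in $N$ lifts to a relation in $M$, and hence to a $\bbk_n[t^{\pm 1}]$-combination $\sum_j \alpha_j \cdot (\text{row}_j \text{ of } P)$. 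Comparing first coordinates and invoking upper-triangularity forces $\alpha_1 d_1 = 0$; since $\bbk_n[t^{\pm 1}]$ is a domain and $d_1 \neq 0$, we get $\alpha_1 = 0$, so the relation lies in the row span of rows $2, \ldots, k$. Hence $N$ is presented by the bottom-right $(k-1) \times (k-1)$ submatrix $P'$ of $P$, which is itself upper triangular with diagonal $d_2, \ldots, d_k$. Since $N$ is a submodule of a torsion module over the PID $\bbk_n[t^{\pm 1}]$ it is itself torsion, so the inductive hypothesis gives $\rank_{\bbk_n} N = \sum_{i=2}^k \deg d_i$.

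Finally, additivity of $\bbk_n$-dimension across the short exact sequence $0 \to N \to M \to M/N \to 0$ yields $\rank_{\bbk_n} M = \sum_{i=1}^k \deg d_i$. The principal subtlety is justifying that $N$ has no spurious relations beyond those in $P'$; this is where the nonvanishing of $d_1$ and the domain property of the skew Laurent ring are essential. In the noncommutative setting ($n \geq 2$) the argument carries through unchanged provided one tracks left/right module conventions carefully.
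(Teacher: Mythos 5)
Your proof is correct, but it takes a genuinely different route from the paper's. The paper reduces to the $2\times 2$ case and runs an explicit Euclidean algorithm by row operations, tracking degrees through the recursion $f_k = f_{k-2}-q_kf_{k-1}$ to show that the off-diagonal degree strictly drops while the diagonal degree sum is preserved; iterating yields a diagonal presentation matrix, for which the claim is immediate. You instead induct on the size $k$ via the short exact sequence $0\to N\to M\to M/N\to 0$ with $N=\langle e_2,\ldots,e_k\rangle$, identify $M/N\cong \bbk_n[t^{\pm 1}]/d_1\bbk_n[t^{\pm 1}]$ and $N$ as the module presented by the lower-right block (your observation that $\alpha_1 d_1=0$ forces $\alpha_1=0$ is exactly the right point, and it is also where the preliminary check $d_i\neq 0$ is genuinely needed), and finish by additivity of $\bbk_n$-dimension. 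Your argument is shorter, avoids the quotient/remainder bookkeeping, and handles arbitrary $k$ without the $2\times 2$ reduction; the paper's argument, on the other hand, doubles as a correctness and termination proof for the diagonalization procedure that the implemented algorithm actually performs, which is why it is phrased in terms of explicit elementary operations. Both are valid; just make sure, as you note, that in the noncommutative case the relation submodule is the set of \emph{right} $\bbk_n[t^{\pm 1}]$-combinations of the rows, so the first-coordinate equation reads $d_1\alpha_1=0$, which still kills $\alpha_1$ since $\bbk_n[t^{\pm 1}]$ has no zero divisors.
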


\begin{proof}
	The claim follows essentially from the Euclidean algorithm.  Row (respectively, column) operations are realized by left (respectively, right) multiplication by elementary matrices.  Thus, to reduce an entry $c$ in the matrix by another entry $b$ of smaller degree using a row operation, we need to use the `left quotient' $c = qb + r$.  Right quotients will be used for column operations.  Since $\bbk_n\left[t^{\pm1}\right]$ is a PID, one can transform $P$ to a diagonal matrix using these operations.  Furthermore, it suffices to prove the lemma for $2\times 2$ matrices.  For, suppose the lemma has been established for $2\times 2$ matrices.  Given \[ P = \begin{pmatrix}
		a&b&c\\ 0&d&e\\ 0&0&f
	\end{pmatrix} \] we may change $P$ to \[ P \leadsto \begin{pmatrix}
		a'&0&c\\ 0&d'&e' \\ 0&0&f
	\end{pmatrix} \] by only disturbing the first two rows and columns.  This may in turn be changed to \[ \begin{pmatrix}
		a''&0&0\\ 0 & d' & e' \\ 0&0&f'
	\end{pmatrix} \] by extending the operations on the submatrix \[ \begin{pmatrix}
		a'&c\\ 0 &f
	\end{pmatrix} \] to the entire $3\times 3$ matrix.  One continues until $P$ has been completely diagonalized.  One continues by induction to prove the lemma for matrices of arbitrary size.
	
	It remains to establish the $2\times 2$ case.  Suppose \[ P = \begin{pmatrix}
		a & b\\ 0 & c
	\end{pmatrix} \]  Given an element $x \in \bbk_n\left[t^{\pm 1}\right]$, we denote its degree by using uppercase symbols, i.e. $X = \deg x$.  Returning our focus to $P$, we may perfome some row and column operations to guarantee that $B < A$ and $B < C$.  The ring $\bbk_n\left[t^{\pm 1}\right]$ has the Euclidean algorithm.  We compute the quotients $q_1,\ldots q_{n+2}$ and remainders $r_1\ldots r_{n+1}$:
	\begin{eqnarray*}
		c &=& q_1b+r_1\\
		b &=& q_2r_1 + r_2\\
		r_1 &=& q_3r_2 + r_3\\
		&\vdots&\\
		r_{n-1} &=& q_{n+1}r_{n} + r_{n+1}\\
		r_{n} &=& q_{n+2}r_{n+1} + 0
	\end{eqnarray*}
	
	Let $O_i$ for $i = 1,\ldots, n+2$ denote the row operation which adds $-q_i$ times row $\frac{3}{2}+\frac{1}{2}(-1)^i$ to the other row.  The first operation is \[ \begin{pmatrix}
		a&b\\ 0&c
	\end{pmatrix} \xrightarrow{O_1} \begin{pmatrix}
		a&b\\ -q_1a & r_1
	\end{pmatrix} \]  Note that $O_1$ takes $P$ out of the set of upper-triangular matrices, but the full sequence of the $O_i$ almosts takes $P$ back to upper-triangular.  To see this, define a recursion relation by $f_{-1} = 0$, $f_{0} = a$, and $f_{k} = f_{k-2} - q_kf_{k-1}$.  We see immediately that the degrees satisfy $F_i = Q_i + F_{i-1}$ for  $i\geq 1$, since $Q_i > 0$, and we see that \[ \begin{pmatrix}
		a&b\\ 0&c
	\end{pmatrix} \xrightarrow{O_1} \begin{pmatrix}
		f_0 & b\\ f_1 & r_1
	\end{pmatrix}	 \xrightarrow{O_2} \begin{pmatrix}
			f_2 & r_2\\ f_1 & r_1
		\end{pmatrix}	 \xrightarrow{O_3} \begin{pmatrix}
				f_2 & r_2\\ f_3 & r_3
			\end{pmatrix} 	 \xrightarrow{O_4}	 \begin{pmatrix}
					f_4 & r_4\\ f_3 & r_3
				\end{pmatrix} \cdots \] Since $r_{n+2} = 0$, this sequence of operations ends at either \[ 	 \begin{pmatrix}
						f_{n+1} & r_{n+1}\\ f_{n+2} & 0
					\end{pmatrix} \hspace{5mm}\mbox{or}\hspace{5mm} 	\begin{pmatrix}
								f_{n+2} & 0 \\ f_{n+1} & r_{n+1}
							\end{pmatrix} \] depending on the parity of $n$.  Let $S$ denote the operation which either swaps the two columns or swaps the columns and swaps the rows, so that \[ S(O_{n+2}\cdots O_1P) = \begin{pmatrix}
								r_{n+1} & f_{n+1}\\ 0 & f_{n+2}
							\end{pmatrix}\]
	is upper-triangular.  Perform a column operation to arrive at \[ P' = \begin{pmatrix}
								r_{n+1} & f'_{n+1}\\ 0 & f_{n+2}
							\end{pmatrix} \] where $F'_{n+1} < R_{n+1} < B$.
	Note that $R_{n+1} + F_{n+2} = R_{n+1} + Q_{n+2} + F_{n+1} = R_{n} + F_{n+1}$ by the definitions of the $r_i$, $q_i$, and $f_i$.  We apply this observation repeatedly to conclude that $R_{n+1} + F_{n+2} = R_1 + F_2$, but this is in turn $R_1 + Q_2 + Q_1  A = B + Q_1 + A = C + A$, which is the sum of the diagonal degrees of $P$.
	
	Thus, we have described a procedure that takes $P = \begin{pmatrix}
		a & b \\ 0 & c
	\end{pmatrix}$ to $P' = \begin{pmatrix}
		a' & b' \\ 0& c'
	\end{pmatrix}$ such that $A+C = A'+C'$ and $B> B'$. After repeatedly aplying this procedure, we end at a diagonal matrix ($b' = 0$) which presents the same module as $P$ and has the same diagonal degree sum.
\end{proof}

\section{The word problem in finitely presented $\bbz[\bbz]$-modules}\label{sec:wp}

Recall that $\bbk_1$ is the (commutative) quotient field of the ring $\bbz\left[\gn{1}/\gn{2}\right]$.  In doing the $\bbk_1\left[t^{\pm 1}\right]$-row operations to put the metabelian Fox matrix in upper-triangular form, it will be necessary to compute (left) polynomial quotients in the ring $\bbk_1\left[t^{\pm 1}\right]$.  Thus, one needs to invert elements of $\bbk_1$ and in particular, needs to know when an element in $\bbz\left[\gn{1}/\gn{2}\right]$ is zero so as not to divide by zero in $\bbk_1$!  This is the difficulty of step 4 of Algorithm~\ref{alg:outline}.

Let $H$ be any group, and consider its group ring $\bbz H$.  Given an element $p = n_1h_1 + \cdots + n_kh_k$ of $\bbz H$, one may determine whether or not $p=0$ by the algorithm: for $2 \leq i \leq k$, if $h_i = h_j$ for some $j < i$, combine the $i$th and $j$th terms by adding $n_j + n_i$.  After running this $O(k^2)$ algorithm, $p=0$ if and only if all remaining $n_i = 0$.

Suppose we have a presentation of the group $H$ with generators $x_i$ and relators $r_j$ (neither indexing set must be finite), and let $h$ be an arbitrary element of $H$, written as a word in the generators.  The problem of deciding whether $h$ is equal to the identity element (represented by the empty word) is known as the \emph{word problem} for this presentation of $H$.  We say this presentation of $H$ has \emph{solvable word problem} if there is an algorithm taking any element $h = h(x_i)$ of $H$ and deciding whether $h=1$ or not.  We refer the reader to Stillwell's enlightening survey on the word problem~\cite{Stillwell}.  While much effort has been made to understand the word problem for finitely presented groups, the case of infinitely presented groups has received less attention.

We are interested in the group $H = \gn{1}/\gn{2}$, where $G = \pi_1(S^3\setminus K)$.  In this case $H$ is an abelian group, and $H$ is finitely generated if and only if the (classical) Alexander polynomial $\Delta_K(t)$ is monic.  The word problem for finitely generated abelian groups is solvable by the structure theorem for finitely generated $\bbz$-modules, but sadly most knots do not have monic Alexander polynomial.  Recall that $H$ has the structure of a $\bbz[\bbz]$-module.  Even though $\bbz[\bbz]$ is not a PID (and so its modules have no a priori structure theorem), the word problem for such modules is solvable.

\begin{thm}\label{thm:swp}
	Let $H$ be an abelian group.  Suppose that $H$ has finite presentation $P$ as a module over $\bbz[\bbz]$. Then there exists a group presentation $P'$ for $H$ that has solvable word problem.
\end{thm}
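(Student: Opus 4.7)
The plan is to translate the $\bbz[t^{\pm 1}]$-module presentation into a group presentation $P'$ whose word problem reduces to a submodule membership question in $\bbz[t^{\pm 1}]^s$, and then to solve that submodule problem using Gr\"obner basis techniques.

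Given a finite $\bbz[t^{\pm 1}]$-presentation $H \cong \bbz[t^{\pm 1}]^s / \im M$ with $M$ an $s \times r$ matrix of Laurent polynomials, I would build $P'$ as follows. The (infinite) generating set is $\{e_{i,k} : 1 \leq i \leq s,\ k \in \bbz\}$, with $e_{i,k}$ representing the element $t^k \cdot g_i$ of $H$. The relators are: (i) $[e_{i,k}, e_{j,\ell}] = 1$ for all $i,j,k,\ell$, encoding that $H$ is abelian; and (ii) for each column $c_j$ of $M$ and each $k \in \bbz$, the word expressing that $t^k c_j = 0$. Any word $w$ in the generators collects to a $\bbz$-linear combination $\sum n_{i,k}\, e_{i,k}$, which corresponds to the vector $v_w = \left(\sum_k n_{1,k} t^k, \ldots, \sum_k n_{s,k} t^k\right) \in \bbz[t^{\pm 1}]^s$, and $w = 1$ in $H$ if and only if $v_w \in \im M$.

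The algorithmic content therefore reduces to deciding submodule membership in $\bbz[t^{\pm 1}]^s$. First, I would multiply the columns of $M$ and the test vector $v_w$ by appropriate powers of $t$ so that everything lies in $\bbz[t]^s$; since $t$ is a unit, this change of representative does not affect membership. Then I would compute a Gr\"obner basis for the resulting submodule of $\bbz[t]^s$ with respect to a position-over-term monomial order, and reduce the cleared $v_w$ against this basis. By the standard reduction theorem, the remainder is $0$ if and only if the original vector lies in the submodule, yielding the desired decision procedure.

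The main obstacle is that Gr\"obner basis theory over the coefficient ring $\bbz$ is more subtle than over a field: one must work with \emph{strong} Gr\"obner bases, tracking both leading monomials and leading integer coefficients, and augment the usual $S$-polynomial completion with $G$-polynomials that handle divisibility of leading coefficients. Termination still holds because $\bbz[t]$ is Noetherian, and effectiveness of Buchberger-style completion in this setting is classical (see e.g.\ Adams--Loustaunau). A secondary technicality is checking independence of the outcome from the power of $t$ used to clear denominators, which follows from the fact that multiplication by $t$ is a $\bbz[t^{\pm 1}]$-automorphism of $\bbz[t^{\pm 1}]^s$ that preserves $\im M$. Once these two points are settled, the submodule membership test gives the required algorithm solving the word problem in $P'$.
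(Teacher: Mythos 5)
Your construction of the group presentation $P'$ and the reduction of its word problem to submodule membership in $\bbz[t^{\pm 1}]^s$ are correct and match the paper, as does the choice of tool (strong Gr\"obner bases over the coefficient ring $\bbz$, as in Adams--Loustaunau). The gap is in the passage from $\bbz[t^{\pm 1}]$ to $\bbz[t]$. Multiplying the columns of $M$ and the test vector by powers of $t$ until every entry is an honest polynomial indeed does not change membership in the $\bbz[t^{\pm 1}]$-submodule $\im M$; but your Gr\"obner basis then decides membership in the $\bbz[t]$-submodule generated by the cleared columns, and that is in general a strictly smaller submodule, since $t$ is a unit in $\bbz[t^{\pm 1}]$ but not in $\bbz[t]$. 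A minimal counterexample: one generator, one relation, $M=(t)$. Then $H=\bbz[t^{\pm 1}]/(t)=0$, so the generator is trivial, yet the test vector $1$ and the column $t$ already lie in $\bbz[t]$, your clearing step does nothing, and reducing $1$ modulo the Gr\"obner basis $\{t\}$ of $t\,\bbz[t]$ leaves the nonzero remainder $1$. Your ``secondary technicality'' argument shows only that membership in $\im M$ is insensitive to multiplication by $t$; it does not close the gap between the $\bbz[t]$-span of the cleared columns and the contraction of the Laurent span to $\bbz[t]^s$, and there is no a priori bound on the power of $t$ needed to witness a combination with Laurent coefficients.

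The fix is to saturate with respect to $t$ --- replace the cleared submodule $N$ by $(N:t^{\infty})$ --- or, equivalently and as the paper does, to present $\bbz[t^{\pm 1}]$ as $\bbz[r,s]/\langle rs-1\rangle$, substitute $r$ for $t$ and $s$ for $t^{-1}$ in $M$, and augment the presentation matrix by the block $(rs-1)I$ before computing the Gr\"obner basis. With that modification the reduce-to-zero criterion is valid and moreover produces a normal form for elements of $H$ (not just a yes/no answer), which the paper exploits for efficiency. The rest of your argument then goes through.
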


Note that $P$ is not a presentation of $H$ as a group, though a group presentation can be obtained from $P$.  For each $\bbz[\bbz]$-module generator $x$, one considers the $\bbz$-many generators $t^i*x$.  Similarly, each $\bbz[\bbz]$-module relation gives rise to $\bbz$-many group relations.  These generators and relations give a group presentation $P'$ for $H$.

The author could not find this result in the literature and expects it to be `folklore.'  The special case when $H$ is the classical Alexander module of a knot in $S^3$ is well-known to knot theorists; see Sections~\ref{sec:ram} and~\ref{sec:sw}.  We present a full proof in Section~\ref{sec:gb}.  The proof for the general case leads to a more effective implementation of Algorithm~\ref{alg:outline} than the proofs in Sections~\ref{sec:ram} and~\ref{sec:sw}.

\begin{cor}
	Given a finite presentation $P$ for a $\bbz[\bbz]$-module $H$, the word problem in $\bbz H$ is solvable.
\end{cor}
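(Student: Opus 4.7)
The plan is to reduce the word problem in the group ring $\bbz H$ to the word problem in the group $H$ itself, and then invoke Theorem~\ref{thm:swp} to handle the latter. This is exactly the schema already flagged at the start of Section~\ref{sec:wp}: the naive $O(k^2)$ algorithm for deciding whether $p = n_1 h_1 + \cdots + n_k h_k$ is zero in $\bbz H$ works as soon as we have a decision procedure for testing equalities $h_i = h_j$ in $H$.

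First, I would take as input the finite $\bbz[\bbz]$-presentation $P$ of $H$ and expand it into the group presentation $P'$ as described in the paragraph after Theorem~\ref{thm:swp}: each module generator $x$ spawns the $\bbz$-many group generators $t^i * x$, and each module relation spawns the corresponding $\bbz$-many group relations. Theorem~\ref{thm:swp} then guarantees that $P'$ has solvable word problem. An element of $\bbz H$ is given by a formal $\bbz$-linear combination of words in the generators of $P'$, so it already has a well-defined meaning to ask whether two of its terms represent the same element of $H$.

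Second, I would implement the collection algorithm from the top of Section~\ref{sec:wp}. Given $p = \sum_{i=1}^{k} n_i h_i$, iterate over pairs $i < j$: use the solvability of the word problem for $P'$ to decide whether $h_i h_j^{-1}$ (a word in the generators of $P'$) equals the identity in $H$; if so, replace the pair of terms $n_i h_i$ and $n_j h_j$ by the single term $(n_i + n_j) h_i$; otherwise leave them alone. After at most $\binom{k}{2}$ such tests, the surviving terms are pairwise distinct in $H$, so $p = 0$ in $\bbz H$ if and only if every surviving coefficient is $0$ in $\bbz$, which is trivially decidable.

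The only nontrivial input is Theorem~\ref{thm:swp} itself; once that is in hand, the corollary is essentially bookkeeping. No single step is an obstacle, but the point worth emphasizing is that \emph{finiteness of the linear combination} is what makes the reduction effective: we need only finitely many instances of the word problem for $H$ per query in $\bbz H$, so a uniform algorithm for $H$ immediately gives a uniform algorithm for $\bbz H$.
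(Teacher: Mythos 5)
Your proposal is correct and follows exactly the paper's own argument: expand the module presentation to the group presentation $P'$, invoke Theorem~\ref{thm:swp} for the word problem in $H$, and then run the $O(k^2)$ collapsing algorithm from the start of Section~\ref{sec:wp} to decide whether $p = n_1h_1 + \cdots + n_kh_k$ vanishes. The paper states this in two sentences; your version just spells out the same reduction in more detail.
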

\begin{proof}
	Given $p = n_1h_1 + \cdots + n_k h_k$.  One can determine whether $h_i = h_j$ since the word problem for $H$ is solvable, and by the argument above, one can collapse $p$ until it is no longer collapsible.
\end{proof}

We would like to remark that the $O(k^2)$ algorithm to collapse $p = n_1h_1 + \cdots + n_k h_k$ is far from optimal.  In putting the metabelian Fox matrix in upper-triangular form, many row operations may be required.  This may well involve inverting many elements in $\bbk_1$, and each time an inversion is done, we must check that we are not dividing by zero.  The expressions $p = n_1h_1 + \cdots + n_k h_k$ get longer as more row operations are performed.  An $O(k^2)$ operation done to many elements $p$ of increasing length is very time consuming.  Thus, a \emph{normal form} for elements of $H$ is preferred.  A normal form for elements in a group is a canonical expression which allows for a quick decision of whether $h = 1$.  For example, consider the polynomial multiplication $(1 + t + t^2)*(1-t-t^2)$, and imagine for a moment that the terms ($t^i$) are unrelated.  Multiplying the polynomials using distributivity yields $1*1 + -1*t - 1*t^2 + \cdots$.  One could then collapse the terms using the $O(k^2)$ algorithm described above.  Now, remember that $t^1*t^1 = t^2$ (i.e. $t^i$ is a normal form for a monomial).  Then one may quickly multiply $(1 + t + t^2)*(1-t-t^2)$ by grouping the terms in the product by degree, i.e. using the normal form: from the distributed product $1 - t - t^2 + \cdots$, one may rewrite the product (using a bit of memory) by scanning through the product only once.  Here is the full product $1-t-t^2+t-t^2-t^3 + t^2-t^3-t^4 = (1)1 + (-1+1)t + (-1-1+1)t^2 + (-1-1)t^3 + (-1)t^4$.  Assuming that integer addition is instantaneous, this method for collapsing polynomials is $O(k)$, where $k$ is the length of the polynomial expression.

\subsection{The rational Alexander module}\label{sec:ram}

The classical integral Alexander module $\azn{0}$ imbeds into the classical rational Alexander module $\an{0}$ by the map \[ \azn{0} = \gn{1}/\gn{2} \hookrightarrow \gn{1}/\gn{2} \otimes_\bbz \bbq = \an{0} \] which takes $a\mapsto a\otimes 1$. This map is an imbedding since $\gn{1}/\gn{2}$ is $\bbz$-torsion free~\cite[Theorem 1.3]{Crowell}.  The rational Alexander module $\an{0}$ is a module over $\bbq\left[t^{\pm1}\right]$, a PID, and so $\an{0} \cong \bigoplus_{i=1}^n \bbq\left[t^{\pm1}\right]/ \langle p_i(t) \rangle $ where $p_i(t) | p_{i+1}(t)$.  Let $d = \sum_{i=1}^n \deg(p_i(t))$.  One can easily construct a $\bbq\left[t^{\pm1}\right]$-module isomorphism \[ \bigoplus_{i=1}^n \bbq\left[t^{\pm1}\right]/ \langle p_i(t) \rangle \cong \bbq^d \] by recording the polynomial coefficients in the factors.  (There is an arithmetic formula for $t$-action on $\bbq^d$ which can be understood by the reducing of polynomials modulo the $p_i(t)$ on the left-hand side).

To sum up, there is an embedding of $\bbz\left[t^{\pm 1}\right]$-modules $\gn{1}/\gn{2} \hookrightarrow \bbq^d$, which induces an imbedding of rings \[ \bbz\left[\gn{1}/\gn{2}\right] \hookrightarrow \bbz[\bbq^d] \]  The set $\bbz[\bbq^d]$ is easy to work with.  Its elements are functions from $\bbq^d$ to the integers (with finite support), which can be implemented on a computer using dictionaries keyed by $d$-dimensional rational vectors and with values in the integers.

Since recognizing $\overrightarrow{0} \in \bbq^d$ is trivial, this method of tensoring $H = \gn{1}/\gn{2}$ with $\bbq$ gives a proof of Theorem~\ref{thm:swp} in the case $G$ is the fundamental group of a knot compelement.

From a computer programming point of view, this approach is untenable, insofar as computing $\delta_1(K)$ is concerned.  Encoding $\bbz\left[\gn{1}/\gn{2}\right]$ as a submodule of $\bbz[\bbq^d]$ is quite simple, but then arithmetic operations in $\bbq^d$ are required to upper-triangularize the metabelian Fox matrix.  To avoid numerical round-off errors, one must use arbitrary precision arithmetic in $\bbq^d$, which has substantial overhead.  Arbitrary precision arithmetic uses rational numbers with arbitrarily large numerators and denominators.  When a modern-day computer does an arithmetic operation an integer with greater than 20 digits ($20 \approx \log(2^{64})$), it splits the integer into shorter pieces, which uses memory and takes time.

We attempted implementing this `tensoring with $\bbq$' approach in step 4 of Algorithm~\ref{alg:outline}, and our program exceeded the computer's 8GB of RAM during computations for $\delta_1$ of several nine crossing knots.  Perhaps one explanation is that even relatively few additions in $\bbq$ can complicate the numerators and denominators: $\frac{1}{99} - \frac{1}{100} + \frac{1}{101} = \frac{10001}{999900}$.  The $t$-action also complicates the rational numbers (cf. Example~\ref{ex:rgex}).

\subsection{The Silver-Williams structure theorem}\label{sec:sw}

Given two maps $f:U\to A$, $g:U\to B$ in the category of abelian groups, one can form the \emph{amalgamated sum} \[ A \oplus_U B := A\oplus B / \{(f(u), 0) = (0,g(u))\mathrm{\ for\ all\ }u\in U\}\]  The amalgamated sum is a pushout in the category of abelian groups. If both $f$ and $g$ are injective, then $A$ and $B$ imbed into $A\oplus_U B$.  If one considers the case $A=B$, then one can iterate the operation.  If $A=B$ and $f$ and $g$ are injective, then this iteration is associative, i.e. \[ \left( B\oplus_U B \right) \oplus_U B \cong B\oplus_U \left( B\oplus_U B \right) \]  One can form infinite amalgamated sums as well: $\cdots\oplus_U B \oplus_U B\oplus_U \cdots$.

Given a finitely presented $\bbz[\bbz]$-module $H$ which is $\bbz$-torsion free, Silver and Williams~\cite{SilWil} construct explicit finitely generated abelian groups $U$ and $B$ and explicit maps $f,g:U\to B$.  By replacing $U$ with $U / (\ker f + \ker g)$ and $B$ with $B/ (g(\ker f) + f(\ker g))$ (and $f$ and $g$ by the induced maps) repeatedly until both $f$ and $g$ are injective.  This procedure terminates after finitely many steps by the Noetherian property of finitely generated abelian groups.  Silver and Williams construct a $\bbz[\bbz]$-module isomorphism $H \to \cdots\oplus_U B \oplus_U B\oplus_U \cdots$. The $t$-action on the amalgamated sum is given by shifting the $B$ factor to the right.

Using this isomorphism, we briefly describe an algorithm to decide whether a given element of $H$ is zero.  Given $x\in H$, apply the Silver-Williams isomorphism.  The image lies in a finite range of summands $C := B \oplus_U \cdots \oplus_U B$.  Since the amalgamating maps are injective, $ C \hookrightarrow \cdots\oplus_U B \oplus_U B\oplus_U \cdots$.  Deciding whether an element of $B \oplus_U \cdots \oplus_U B$ is zero is relatively simple.  Let $A = B \oplus_U \cdots \oplus_U B$, which has one fewer $B$ factor than $C$, so that $C \cong A \oplus_U B$.  Then an element $y \in C$ is zero if and only if it can be written as a sum of elements of the form $(f(u), -g(u))$, for $u\in U$.  In other words $y\in C$ is zero if and only if the $B$-component can be pulled back by $g$ and pushed into $A$ via $f$ so that the sum of the $A$-component with this `swept back' $B$-component is zero in $A$.  One applies this sweeping (left) procedure until $A=B$, a finitely generated abelian group, where the detection of $0$ is simple.  We summarize this in the following proposition, which with the above remarks gives a solution to the word problem for the Silver-Williams group presentation of $H$.

\begin{lem}\label{lem:sw}
	An element $y$ of $B \oplus_U \cdots \oplus_U B$ is zero if and only if $y$ can be `swept' to an element $y'$ in the left-most $B$-factor and this $y'=0$.
\end{lem}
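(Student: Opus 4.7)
The approach is induction on the number $n$ of $B$-factors, writing $C_n = B \oplus_U \cdots \oplus_U B$ ($n$ copies), with the base case $C_1 = B$ being trivial. The inductive step relies on the associativity of the amalgamated sum stated in the excerpt (which holds because $f$ and $g$ are assumed injective), so that $C_n \cong C_{n-1} \oplus_U B$, where the structure map $U \to C_{n-1}$ sends $u$ to $f(u)$ sitting in the rightmost $B$-factor of $C_{n-1}$, and the map $U \to B$ is $g$. This is what makes ``sweep the rightmost $B$-factor'' a meaningful operation.

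The engine of the argument is the following explicit description of the kernel in a single amalgamation: an element $(a,b) \in A \oplus_U B$ is zero if and only if it lies in the subgroup generated by the defining relations $(f(u),0) - (0,g(u))$, i.e.\ if and only if $b = -\sum g(u_i)$ and $a = \sum f(u_i)$ for some finite collection $u_i \in U$. In particular, a necessary condition for $(a,b) = 0$ is $b \in g(U)$. Conversely, if $b \in g(U)$, then by injectivity of $g$ we may uniquely write $b = g(v)$, and using the defining relation we obtain $(a,b) = (a + f(v), 0)$ in $A \oplus_U B$. Applying this to $C_n = C_{n-1} \oplus_U B$ with $y = (y_{n-1}, b_n)$: either $b_n \notin g(U)$, in which case $y \neq 0$ and sweeping fails; or $b_n = g(v_n)$, and then $y$ is equal in $C_n$ to the element of $C_{n-1}$ obtained by adding $f(v_n)$ to the rightmost $B$-factor of $y_{n-1}$. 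This is precisely one step of the sweep.

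Iterating, one performs $n-1$ sweeps. If any of them fails (some intermediate rightmost coordinate is not in $g(U)$), then $y \neq 0$ in $C_n$ by the necessary condition above. If every sweep succeeds, we end with an element $y' \in B$ in the leftmost factor satisfying $y = y'$ in $C_n$, so $y = 0$ in $C_n$ if and only if $y' = 0$ in $B$. Injectivity of $g$ is used at each step to guarantee that the chosen preimage $v_i$ is unique, so the swept element is unambiguous; injectivity of $f$, together with the Silver--Williams setup that inclusions of each $B$-factor into $C_n$ are injective, ensures that $y'$ represents the same element as $y$ under a bona fide equality and not merely up to some further ambiguity.

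The main obstacle I expect is the bookkeeping: verifying that the iterated amalgamated sum really factors as $C_n \cong C_{n-1} \oplus_U B$ with the structure map into $C_{n-1}$ being $f$ composed with inclusion of the last factor (rather than, say, a combination of inclusions), and confirming that the ``relations'' in $C_n$ are exhausted by the pairwise defining relations between adjacent $B$-factors. Both follow from associativity of the pushout once $f, g$ are injective, but they need to be invoked explicitly so that the sweep described above is not overlooking additional relations that could witness $y = 0$ in a way the sweep cannot detect.
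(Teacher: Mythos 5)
Your proposal is correct and follows essentially the same route as the paper, which proves the lemma only via the informal discussion immediately preceding its statement: decompose $C \cong A \oplus_U B$, observe that $y=0$ forces the rightmost $B$-component to lie in $g(U)$, pull back by $g$ and push into $A$ by $f$, and iterate, using injectivity of $f$ and $g$ so that $A$ and $B$ embed in $A\oplus_U B$. Your write-up merely makes explicit the induction, the description of the kernel as $\{(f(u),-g(u)) : u\in U\}$, and the failure criterion for the sweep, all of which are consistent with the paper's argument.
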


While this approach allows one to detect $0$ in the ring $\bbz\left[\gn{1}/\gn{2}\right]$, it does not give a normal form for elements of $\gn{1}/\gn{2}$.  For example, an element in $C$ could be swept to the left out of $C$ -- there is no canonical $B$-factor to stop the sweeping.  Thus, it does not speed up the $O(k^2)$ algorithm for collapsing elements of $\bbz\left[\gn{1}/\gn{2}\right]$, and so we chose not to implement the Silver-Williams method to complete step 4 of Algorithm~\ref{alg:outline}.

\subsection{Gr\"obner bases}\label{sec:gb}

Gr\"obner bases over Laurent polynomial rings have been used for algorithmic computations of the elementary Alexander ideals~\cite{GVHHUE} but not, to the author's knowledge, to compute the entire Alexander module.

Let $R$ denote the polynomial ring $\bbz[r,s]$.  We will use the \emph{degree reverse lexicographic} order on monomials in $R$: $r^as^b < r^cs^d$ if and only $a+b < c+d$ or $\{$ $a+b = c+d$ and $d<b$ $\}\}$.  For example, $s < r < s^3 < rs^2 < r^2s < r^3$.  This is a total ordering on the monomials in $R$ so that any polynomial may be written in the unique way so that the monomials are increasing.  A \emph{Gr\"obner basis} $B$ for an ideal $I$ in $R$ is a finite,  list of polynomials $f_1,\ldots f_n$ so that division of any polynomial $R$ by the elements of $B$ gives a unique remainder.  Here, division of $f$ by $B$ means, roughly, to reduce $f$ modulo the elements of $B$ until it can be reduced no further.  More specifically, division of $f$ by $B$ is achieved by running through this loop: while there is an $f_i\in B$ such that $f = q\cdot f_i + g$ for some $q\neq 0 $ and $\deg(g) < \deg(f)$, set $f = g$ and repeat until $q=0$ for all choices of $i$.  The computations of the quotients $q$ of course depends on the choice of $f_i$ for each pass through the loop, but the output of the loop, i.e. the remainder of $f$ divided by $B$, is unique since $B$ is a Gr\"obner basis.  We denote $f \% I$ the reduction of $f\in R$ modulo the Gr\"obner basis $B$.  It follows that $f \% I = 0$ if and only if $f \in I$.

One can construct an ordering on $R^d$ from a monomial ordering in $R$ and a `vector ordering.'    Let $\overrightarrow{v}$ and $\overrightarrow{w}$ be vectors in $R^d$, and let $i$ (respectively, $j$) denote the index of the last nonzero entry of $\overrightarrow{v}$ (respectively, $\overrightarrow{w}$).  We say $\overrightarrow{v} < \overrightarrow{w}$ if either $i < j$ or $\{$ $i = j$ and $\overrightarrow{v}(i) < \overrightarrow{w}(i)$ $\}$.  This gives a total ordering on the elements of $R^d$.  Recall that the division algorithm in $R$ computes quotients by looking at the monomials in descending order.  This division algorithm can be extended to $R^d$ by defining a monomial in $R^d$ to be an $R$-monomial times a standard basis vector in $R^d$.

The notion of Gr\"obner bases have been extended to finitely presented $R$-modules.  We refer the reader to~\cite[Chapter 3]{AdamsLoustaunau} but discuss the salient points here.  Given a finite presentation matrix $P$ for an $R$-module $M$, one obtains an isomorphism $M\cong \mathrm{coker\ } P$.  Thus, the elements of $M$ may be thought of as vectors in $R^d$ modulo the column space of $P$.  A Gr\"obner basis for the module $M$ (with respect to the ordering on $R^d$) is a list $B$ of vectors $\overrightarrow{v_1},\ldots,\overrightarrow{v_n}$ such that the reduction of any $\overrightarrow{v}$ modulo $B$ produces a unique remainder.  We denote the reduction of $\overrightarrow{v}$ modulo the Gr\"obner basis $B$ by $\overrightarrow{v}\% B$.  Such a basis is useful because it gives a normal form for elements of $M$, i.e. $\overrightarrow{v} = \overrightarrow{w}$ as elements of $M$ if and only if $\overrightarrow{v} \% B = \overrightarrow{w} \% B$ in $R^d$.  In particular, a Gr\"obner basis affords one an easy method to detect $0\in M$, and such a basis is algorithmically computable.

We are interested not in the ring $R$ but in its quotient $\bbz[r,s]/\langle rs -1 \rangle \cong \bbz\left[t^{\pm 1}\right]$.  Given a presentation matrix $P$ for a $\bbz\left[t^{\pm 1}\right]$-module $M$, change all occurrences of $t$ to $r$ and $t\inv$ to $s$.  Then augment $P$ by the matrix $(rs-1) I$, and call the result $P'$.  Reducing a vector $\overrightarrow{v}$ modulo a Gr\"obner basis $B'$ for the module presented by $P'$ effectively reduces $\overrightarrow{v}$ by the columns of $P$ and sets $s = r\inv$.  In other words, for  any $\overrightarrow{v}\in R^d$, $\overrightarrow{v} \% B'$ is a normal form for the image of $\overrightarrow{v}$ in $M$.  We summarize this discussion in the following lemma.

\begin{lem}
	Let $H$ be a finitely presented $\bbz\left[t^{\pm 1}\right]$-module.  Then there exists a Gr\"obner basis $B$ for $H$ that provides a normal form $x \% B \in \bbz\left[t^{\pm 1}\right]^d$ for an element $x\in H$.  Furthermore, $x \% B = \overrightarrow{0}$ if and only if $x=0 \in H$.
\end{lem}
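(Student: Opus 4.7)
The plan is to bootstrap the lemma from the standard theory of Gr\"obner bases over the two-variable commutative polynomial ring $\bbz[r,s]$, using the identifications $t \leftrightarrow r$, $t^{-1}\leftrightarrow s$, $rs = 1$ to encode Laurent polynomials. Fix a finite $d \times m$ presentation matrix $P$ for $H$ as a $\bbz[t^{\pm 1}]$-module, so that $H \cong \bbz[t^{\pm 1}]^d / \mathrm{col}(P)$. Exploiting the ring isomorphism $\bbz[t^{\pm 1}] \cong \bbz[r,s]/\langle rs-1\rangle$, one can pull $H$ back to a $\bbz[r,s]$-module. Writing $\widetilde{P}$ for the matrix obtained from $P$ by substituting $r$ for each positive power of $t$ and $s$ for each negative power, the block matrix $P' = \bigl[\widetilde{P} \mid (rs-1)I_d\bigr]$ is a presentation of $H$ over $\bbz[r,s]$: its column submodule $N \subset \bbz[r,s]^d$ is precisely the preimage of $\mathrm{col}(P)$ under the quotient map $\bbz[r,s]^d \twoheadrightarrow \bbz[t^{\pm 1}]^d$, since adding multiples of $(rs-1)\vect{e_i}$ is exactly what it takes to mod out the relation $rs = 1$.

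Next, I invoke the standard Gr\"obner basis theory for submodules of $\bbz[r,s]^d$ (Adams-Loustaunau, Chapter 3), using the term order introduced just above the lemma: degree reverse lexicographic on the monomials of $\bbz[r,s]$, extended to $\bbz[r,s]^d$ by the position-of-last-nonzero-entry rule. A Buchberger-style procedure produces a finite Gr\"obner basis $B$ for $N$ with the defining property that the multivariate division algorithm assigns a unique remainder $\vect{v}\,\%\,B \in \bbz[r,s]^d$ to any $\vect{v} \in \bbz[r,s]^d$; moreover, $\vect{v}\,\%\,B = \vect{w}\,\%\,B$ if and only if $\vect{v} - \vect{w} \in N$.

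Finally, I transport the normal form back to $\bbz[t^{\pm 1}]^d$. Given $x \in H$, pick any representative $\vect{v} \in \bbz[t^{\pm 1}]^d$, lift it to $\bbz[r,s]^d$ by the obvious substitution ($t^k \mapsto r^k$, $t^{-k}\mapsto s^k$), reduce modulo $B$, and project back via $\bbz[r,s] \twoheadrightarrow \bbz[t^{\pm 1}]$. This is well-defined: two different lifts of the same $\vect{v}$ differ by an element of $(rs-1)\bbz[r,s]^d \subset N$, and two representatives of the same $x \in H$ have lifts differing by an element of $N$, so in both cases the Gr\"obner reduction returns the same vector. In particular, $x\,\%\,B = \vect{0}$ iff a lift of any representative lies in $N$, iff $x = 0$ in $H$.

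There is no deep obstacle; the lemma is a translation of classical Gr\"obner machinery. The only nonroutine ingredient is the existence of Gr\"obner bases over the non-field coefficient ring $\bbz[r,s]$, which requires the ``strong'' version of the Buchberger algorithm tracking leading integer coefficients, but this is standard. The main bookkeeping point one must verify is that augmenting $\widetilde{P}$ with $(rs-1)I_d$ really does produce a $\bbz[r,s]$-presentation of $H$, and that the projection $\bbz[r,s]^d \to \bbz[t^{\pm 1}]^d$ restricts to an injection on the set of $B$-normal forms (which follows because any two $\bbz[r,s]^d$-normal forms differing by an element of $(rs-1)\bbz[r,s]^d \subset N$ must coincide). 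Once these routine checks are in place, the conclusion is immediate.
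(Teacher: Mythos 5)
Your proposal is correct and follows essentially the same route as the paper: encode $\bbz[t^{\pm 1}]$ as $\bbz[r,s]/\langle rs-1\rangle$, augment the presentation matrix by $(rs-1)I$, and invoke the standard Gr\"obner basis theory for finitely presented modules over $\bbz[r,s]$ from Adams--Loustaunau with the stated monomial and vector orderings. The extra well-definedness checks you supply (that different lifts and different representatives reduce to the same normal form) are the routine verifications the paper leaves implicit in its discussion preceding the lemma.
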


While polynomial division takes CPU time, this Gr\"obner basis method has provided a more effective implementation of Algorithm~\ref{alg:outline}'s step 4 than the `tensoring with $\bbq$' method.  While this method is slightly slower for the trefoil and figure-eight, it uses far less RAM and is able to calculate $\delta_1$ for the knots $11_{n67}$, $12_{n293}$, and the (at most) $23$ crossing knot from Example~\ref{ex:nut}.  We will illustrate the overhead benefits of the Gr\"obner basis method.

\begin{exam}\label{ex:rgex}
	Let $P$ be the matrix \[ P = \begin{pmatrix}
		3 - 3t & 2-t\\
		1-2t & 3-3t
	\end{pmatrix} \] as in Example~\ref{ex:nut}.  Let us view $P: \bbz[\bbz]^2\to\bbz[\bbz]^2$ by $\overrightarrow{x}\mapsto \overrightarrow{x}P$.  So the module $P$ presents is $\bbz[\bbz]^2$ modulo the row space of $P$.  This agrees with the convention for $\azn{0}$ as established in Example~\ref{ex:tref}.  We aim to find the normal form of the vectors $\overrightarrow{x} = (3t^3 + 2t^2 + 2)e_1 + (5t^2 - t^3) e_2$ and $t*\overrightarrow{x}$ using the rational Alexander module and Gr\"obner basis methods.
	
	Working over $\bbq[\bbz]$, we calculate the Smith normal form of $P$ to be $D = LPR$, where \[ D = \begin{pmatrix}
		1&0\\
		0&7t^2-13t+7
	\end{pmatrix} \mathrm{\ and\ } R = \begin{pmatrix}
		1 & -7/3t + 5/3\\
		0&1
	\end{pmatrix} \]  For this example, $L$ is irrelevant but may be computed using inverses.  As a $\bbq[\bbz]$-module, $M\otimes \bbq$ is $\bbq[\bbz]/\langle 7t^2-13t+7 \rangle$, though recall that $M$ merely injects into $M\otimes \bbq$ as a $\bbz[\bbz]$-module.  This quotient ring is isomorphic to $\bbq^2$ by recording the coefficients of the linear and constant terms of a polynomial's reduction modulo $7t^2 - 13t+7$.
	
	Now given any element $y \in \bbz[\bbz]^2$ which projects to an element $\overline y \in M$, we have that $\phi(\overline y) = \overline{yR}$, where $\phi$ is the map that takes $M$ to $\bbq[\bbz]/\langle 7t^2-13t+7 \rangle$ and $\overline{\phantom{x}} $ denotes taking the quotient modulo $P$ or $D$.  After composing with the isomorphism to $\bbq^2$, we see that our $\overrightarrow{x} \mapsto \left(\frac{-1420}{147}, \frac{281}{21}\right)$ and $t\overrightarrow{x} \mapsto \left(\frac{-4691}{1029}, \frac{1420}{147}\right)$.  Notice how the $t$-action complicates the rational entries.  Addition in this module complicates these rational numbers even more.
	
	Let $r = t$ and $s = t\inv$.  Using Macaulay2~\cite{M2}, we compute a Gr\"obner basis as described above to be \[ G = \left\{\begin{pmatrix}
		7r+7s-13\\0 
	\end{pmatrix}, 	\begin{pmatrix}
			7s^2-13s+7\\ 0
		\end{pmatrix}, 	\begin{pmatrix}
				rs-1\\ 0
			\end{pmatrix}, 	\begin{pmatrix}
					7s-5\\ 3
				\end{pmatrix}, 	\begin{pmatrix}
						5s-4\\s+1 
					\end{pmatrix}, 	\begin{pmatrix}
							3r+7s-8\\ r+1
						\end{pmatrix}\right\}
	\]  Macaulay2 computes $\overrightarrow{x}\% G = (-r^3+3r^2-2r-14s+14, 0)$ and $(r\overrightarrow{x})\% G = (-r^4+3r^3-2r^2-14s+12, 0)$.
	
	In the $\otimes \bbq$ method, the addition and $t$-action have inherent definitions in $\bbq^2$.  This is easy to implement and feels like it should be fast.  However, the rational numbers become complicated after a few operations. Each subsequent operation takes increasingly more time, as evidenced by Example~\ref{ex:qbad}.  In practice, this method's implementation exhausts our machine's memory during even simple $\delta_1$ calculations.
	
	In the Gr\"obner basis method, the addition and $t$-action do not have inherent definitions, i.e. $(\overrightarrow{x}+\overrightarrow{y})\%G \neq \overrightarrow{x}\% G + \overrightarrow{y}\% G$ and $(t\overrightarrow{x})\%G \neq t(\overrightarrow{x}\%G)$.  After every addition and $t$-action, one must divide the result by the Gr\"obner basis, which takes CPU time.  However, the form $\overrightarrow{x}\% G$ is simpler -- requiring less RAM  than its $\bbq^2$ counterpart -- and we find the Gr\"obner basis implementation of encoding the ring $\bbz\left[ \gn{1}/\gn{2} \right]$ more effective.
	
\end{exam}

\begin{exam}\label{ex:qbad}
	The $t$-action on $\bbq^2$ in the above example is given by $t*(a,b) = \left(b + \frac{13}{7}a, -a\right)$.  Consider the elements $v_0 = (-1420/147, 281/21)$ and $x=(1282/147,-167/21)$ of $\bbq^2$.  These are the images under the isomorphism $\bbz[\bbz]^2 / \mbox{row }P \to \bbq^2$ of the elements $(3t^3 + 2t^2 + 2, 5t^2 - t^3)$ and $(t^2-3t+7, 4t^2 + 5t^3-2)$, respectively.  Define a sequence $v_i$ by $v_{n} = t*v_{n-1} + x$.
	
	In the algorithm to compute $\delta_1$, one must perform row operations on a matrix whose entries lie in $\bbk_1\left[t^{\pm 1}\right]$.  The coefficient of a monomial of one entry of this matrix is a formal quotient of elements of $\bbz\left[ \gn{1}/\gn{2} \right]$.  The numerator and denominator of this quotient may be quite long.  Each term in the numerator is an integer times an element of $\gn{1}/\gn{2}$.  If we think of $\gn{1}/\gn{2}$ as a subset of $\bbq^2$, it is tempting to absorb this integer into the vector, but this is not permissible in the group ring $\bbz[\bbq^2]$.  A single row operation on this matrix necessitates adding many monomials and hence many coefficients in $\bbk_1$.  Each addition $a/b + c/d = (ad+bc)/bd$ in $\bbk_1$ involves three multiplications in the group ring $\bbz[\bbq^2]$.  Each group ring multiplication involves \emph{many} group operations in $\bbq^2$, depending on the number of terms in the $\bbz[\bbq^2]$-elements which are multiplied.  Note that each row operation on the matrix vastly increases the number of group operations in $\bbq^2$ that must be done.  After a few row operations, there are a multitude of $\gn{1}/\gn{2}$ elements involved.  If we think of these as elements of $\bbq^2$, the numerators and denominators of the entries of each vector in $\bbq^2$ grow longer as the number of $\bbq^2$ operations increases.  This, we believe, is why the `tensoring with $\bbq$' implementation of step 4 exhausts our computer of all available memory.
	
	The sequence $v_n$ is intended to indicate how complicated the rational numbers become after many arithmetic operations.  We do not foresee the $v_n$ arising in a $\delta_1$ computation.  While the rational model for $\gn{1}/\gn{2}$ offers a faster implementation for the $\bbz\left[ \gn{1}/\gn{2} \right]$-operations than the Gr\"obner basis model -- at least when few operations are required -- the rational model slows down significantly when many operations are performed.  We computed $v_n$ to $n=50000$ and noticed a significant slowing down of the computation as $n$ increased.  See the table in Figure~\ref{fig:table}.  The CPU time required to compute $v_n$ appears to be exponential in $n$.
	
	\begin{figure}[!ht]
		\begin{center}
			\begin{tabular}{ c|c|c }
			  Step $i$ & \parbox{4cm}{CPU time to compute $v_{1000i}$, in seconds} & \parbox{4cm}{Digits in denominator of first entry of $v_{1000i}$} \\ \hline
			1 & 0.048321 & 844\\
			2 & 0.146888 & 1689\\
			3 & 0.315882 & 2534\\
			4 & 0.57017 & 3380\\
			5 & 0.921086 & 4225\\
			10 & 4.870255 & 8450\\
			20 & 28.885591 & 16901\\
			30 & 84.075005 & 25352\\
			40 & 181.961961 & 33803\\
			45 & 249.442993 & 38029\\
			46 & 265.189638 & 38874\\
			47 & 281.562427 & 39719\\
			48 & 298.729361 & 40564\\
			49 & 316.309955 & 41409\\
			50 & 334.467209 & 42254
			\end{tabular}
			\label{fig:table}
		\end{center}
	\end{figure}
	
	Note that the terms of this sequence $v_n$ written in the Gr\"obner basis model will also become complicated as $n$ increases.  By the Gr\"obner basis in Example~\ref{ex:rgex}, the second entry in $v_n$ will be an integer, and the first entry will be a polynomial in $r$ (with perhaps one or two $s$ terms) whose degree grows sublinearly in $n$ and most (all but two) of whose coefficents have absolute value less than $7/2$ (Macaulay2 computes a quotient so that the remainder's leading coefficient is as small in absolute value as possible, i.e. $13r\  \%\  5r = -2r$).  Note that the second entry has absolute value less than $3/2$.  Thus, $v_n$ can be stored in a computer by approximately $n$ digits.  This is more efficient than the rational method, at least experimentally, for approximately $n$ digits were used \emph{just for the denominator of one entry} in the $\bbq^2$ representation of $v_n$, according to the table above.
\end{exam}

\section{Computations}

Using a 2.5 GhZ Intel i5 processor with four cores and 8 GB of RAM, we made the following $\delta_1$ calculations.  The implementation is discussed in Section~\ref{sec:implementation}.  We obtained the PD code for all knots using KnotInfo~\cite{KnotInfo}.  For the $23$ crossing Example~\ref{ex:nut2}, we used the program Link Sketcher, which we found on KnotInfo, to compute the PD Code.

\begin{exam}
	The trefoil knot $3_1$ has PD code $X[1, 5, 2, 4], X[3, 1, 4, 6], X[5, 3, 6, 2]]$.  From this PD code, our program calculated $\delta_1(3_1) = 1$ in $0.2128$ seconds.
\end{exam}

\begin{exam}
	The figure-eight knot $4_1$ has PD code $X[4, 2, 5, 1], X[8, 6, 1, 5], X[6, 3, 7, 4], X[2, 7, 3, 8]$.  Our program calculated $\delta_1(4_1) = 1$ in $0.5616$ seconds.
\end{exam}

\begin{exam}
	The $2,5$-torus knot $5_1$ has PD code $X[2, 8, 3, 7], X[4, 10, 5, 9], X[6, 2, 7, 1], X[8, 4, 9, 3], X[10, 6, 1, 5]$.  Our program calculated $\delta_1(5_1)=3$ in $9.5816$ seconds.
\end{exam}

\begin{exam}
	The knot $5_2$ has PD code $X[1, 5, 2, 4], X[3, 9, 4, 8], X[5, 1, 6, 10], X[7, 3, 8, 2], X[9, 7, 10, 6]$.  Our program calculated $\delta_1(5_2)=1$ in $1.4597$ seconds.
\end{exam}

\begin{exam}
	The stevedore knot $6_1$ has PD code $X[1, 7, 2, 6], X[3, 10, 4, 11], X[5, 3, 6, 2], X[7, 1, 8, 12], \\ X[9, 4, 10, 5], X[11, 9, 12, 8]$.  Our program calculated $\delta_1(6_1) = 1$ in $2.7247$ seconds.
\end{exam}

\begin{exam}
	The knot $11_{n67}$ has PD code $X[4, 2, 5, 1], X[8, 4, 9, 3], X[11, 17, 12, 16], X[14, 5, 15, 6],\\ X[6, 15, 7, 16], X[9, 19, 10, 18], X[17, 11, 18, 10], X[19, 1, 20, 22], X[13, 20, 14, 21], X[21, 12, 22, 13], X[2, 8, 3, 7]$.  This PD code produces a Wirtinger presentation with $11$ generators, and our program would attempt to put a $10\times 10$ matrix over $\bbk_1\left[t^{\pm 1}\right]$ in upper-triangular form.  However, $11_{n67}$ is a $3$-bridge knot, and thus its fundamental group has a presentation with $3$ generators and $2$ relations.  Using Tietze transformations in GAP~\cite{GAP}, we were able to reduce the Wirtinger presentation to the generators $x_1,x_2,x_3$ and relations $x_1^{-1} x_2 x_3^{-1} x_2^{-2} x_1 x_3 x_2 x_1^{-1} x_2 x_3^{-1} x_2^{-2} x_1 x_2^{-1} x_3^{-1} x_1^{-1} x_2^2 x_3 x_2^{-1} x_1 x_2 x_3 x_2^{-1}$ and $x_1 x_3^{-1} x_1^{-1} x_2 x_3^{-1} x_2^{-2} x_3^{-1} x_1^{-1} x_2^2 x_3 x_2^{-1} x_1^2 x_3 x_1^{-1} x_2 x_3^{-1} x_2^{-2} x_1 x_3 x_2^2 x_3 x_2^{-1} $.  In this presentation, each $x_i$ maps to a generator of $\bbz$, and so a presentation with a `nice' (as in Example~\ref{ex:tref}) generating set can easily be obtained.   This results in a $2\times 2$ metabelian Fox matrix that must be put in upper triangular form.  Of course, the entries in the $2\times 2$ matrix are more complicated than in the original $10\times 10$ matrix, but the $\delta_1$ computation is faster.
	
	Given the reduced Wirtinger presentation, our program calculated $\delta_1(11_{n67}) = 3$ in $11324$ seconds.  As mentioned around Theorem~\ref{thm:11n67}, this is the first known knot in the tables with $\delta_1 > \delta_0$.
\end{exam}

\begin{exam}
	The knot $12_{n293}$ has PD code $X[1, 4, 2, 5], X[3, 10, 4, 11], X[5, 12, 6, 13], X[16, 8, 17, 7], \\ X[9, 2, 10, 3], X[11, 8, 12,9], X[20, 13, 21, 14], X[6, 16, 7, 15], X[24, 17, 1, 18], X[22, 19, 23, 20], X[14, 21, 15, 22], \\ X[18, 23,19, 24]$.  Using GAP, we found a simplified presentation for the fundamental group of the knot complement with generators $a,b,c$ and relators $b^2ab^{-2}a^{-1}c^{-1}bacab^2a^{-1}b^{-2}a^{-1}c^{-1}a^{-1}b^{-1}cab$ and $b^{-1}a^{-1}c^{-1}bab^{-1}cabcab^{-1}a^{-1}c^{-1}bab^{-1}a^{-1}c^{-1}ba^{-1}b^{-1}caba^{-1}c^{-1}$.  Here $a$ maps to a generator under abelianization, and $b$ and $c$ map to zero.  Our program calculated $\delta_1(12_{n293}) = 3$ in 8393 seconds.  This is another example of a knot with $2\cdot\mathrm{genus} = 1+\delta_1 > 2\cdot\delta_0$.
\end{exam}

\begin{exam}\label{ex:nut2}
	Let $K$ denote the knot from Example~\ref{ex:nut}.  The diagram for $K$ in Figure~\ref{fig:nut} has $23$ crossings, and its PD code is $X[34,1,35,2], X[2,33,3,34], X[32,3,33,4], X[4,31,5,32], X[30,5,31,6], X[6,29,7,30], \\ X[7,40,8,41], X[8,20,9,19], X[22,10,23,9], X[37,10,38,11], X[11,36,12,37], X[12,24,13,23], \\ X[46,13,1,14], X[14,45,15,46], X[44,15,45,16], X[16,43,17,44], X[42,17,43,18], X[18,41,19,42], \\ X[27,20,28,21], X[21,26,22,27], X[24,36,25,35], X[38,26,39,25], X[28,40,29,39]$.  Using Tietze transformations in GAP, we were able to reduce the Wirtinger presentation with three generators $x_1,x_2,x_3$ and three relations $x_1 x_2\inv x_1 x_2 x_1\inv x_2 x_3\inv x_2 x_3\inv x_2\inv x_3 x_2\inv$ and $x_3\inv x_1 x_2\inv x_1 x_2 x_1\inv x_2 x_1\inv x_3 x_1\inv x_3\inv x_1$.
	
	Our program calculated $\delta_1(K) = 1$ in $0.7698$ seconds.
\end{exam}

\section{Mutant examples: the proof of Theorem~\ref{thm:DetectsMutants}}\label{sec:Mutants}

Figure~\ref{fig:12mutants} shows the knots $12_{n23}$ and $12_{n31}$. One can check the two are related by rotating the contents of the dashed circle by $\pi$, hence the knots $12_{n23}$ and $12_{n31}$ form a mutant pair.  This agrees with~\cite{DWL}.

\begin{figure}[!ht]
	\begin{center}
		\begin{overpic}[scale=.45]
			{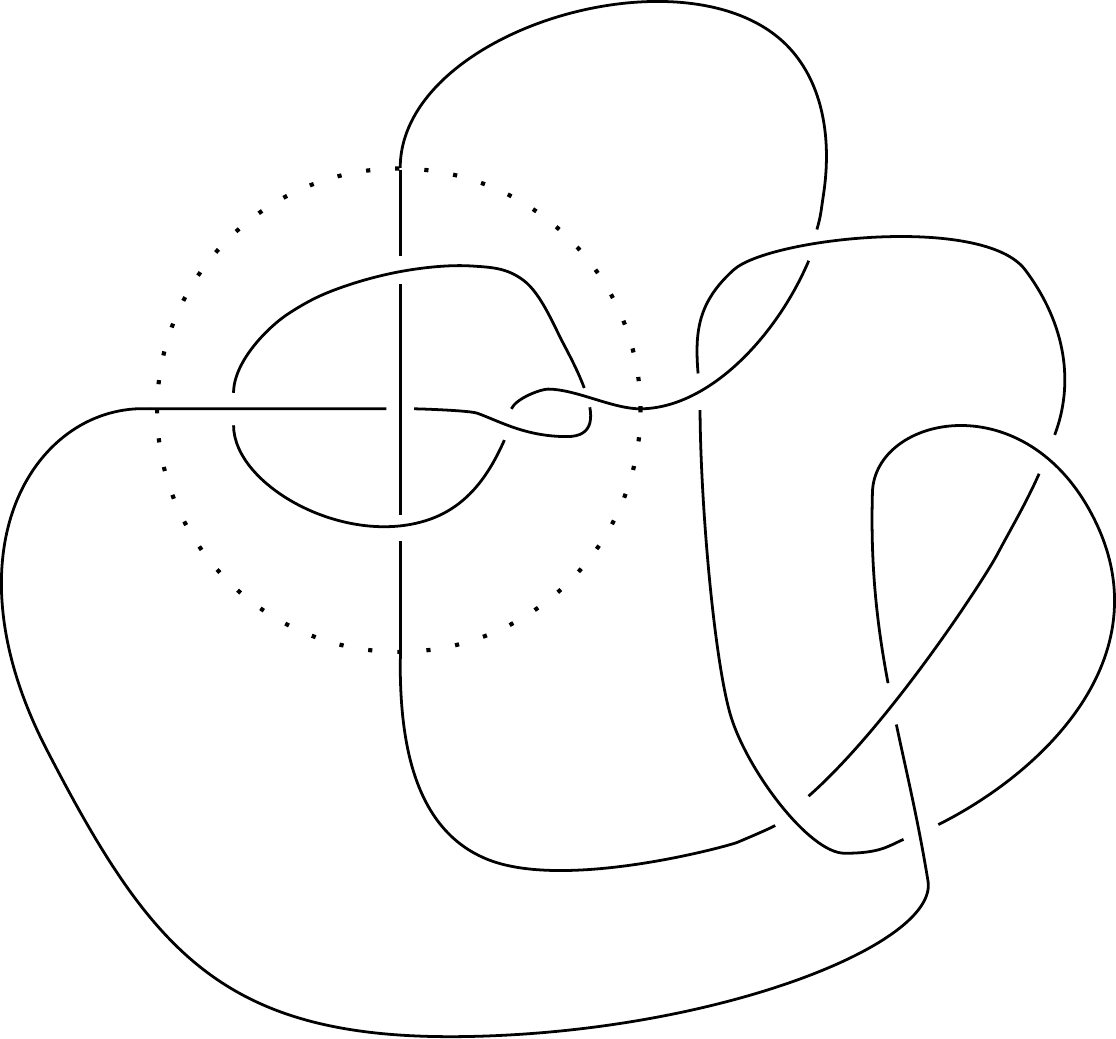}
		\end{overpic}
		\hspace{1cm}
		\begin{overpic}[scale=.45]
			{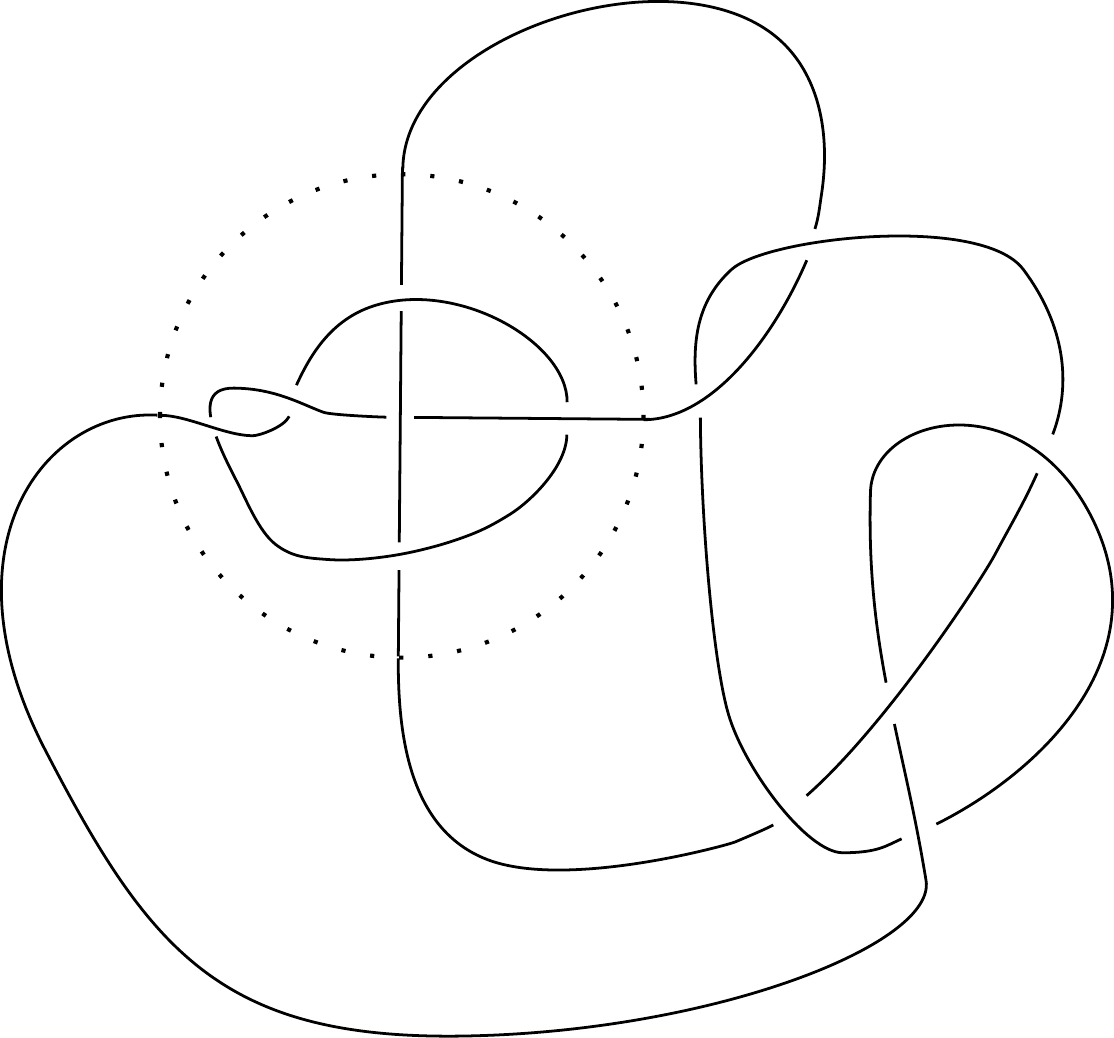}
		\end{overpic}
	\end{center}
	\caption{$12_{n23}$ and $12_{n31}$}
	\label{fig:12mutants}
\end{figure}

\begin{exam}\label{ex:12n23}
	We have found by experiment that a presentation for the knot group coming from an arc diagram (sometimes called a grid diagram or $\ast$ projection) is a suitable input for our $\delta_1$ program.  Such a presentation can sometimes yield a much quicker $\delta_1$-computation than a Wirtinger presentation.  We will quickly review arc diagrams for a knot.  An arc diagram for a knot is a planar diagram for a knot that can be arranged in an $n\times n$ square grid.  Each segment of the knot is either vertical or horizontal, and each row (respectively, column) of the grid contain precisely one horizontal (respectively, vertical) segment of the knot.  We use the convention that vertical segments must pass over horizontal segments.  Every knot has such a diagram~\cite[p. 205]{Neuwirth}, and from such a diagram, one can compute a presentation for the fundamental group of the complement of the knot~\cite[Section 5]{Neuwirth},~\cite[Proof of Proposition 6.2]{MOST}.
	
	To get a presentation for the knot group from an arc diagram (that is compatible with our program), first orient the knot one way or the other.  The generating set for the group will be $x_1,\ldots, x_n$, where $n$ is the size of the grid.  The generators correspond to (positive) meridians of the (oriented) vertical segments of the knot projection, numbered from left to right.  The knot necessarily crosses $n-1$ of the horizontal gridlines in the $n \times n$ grid.  For each such horizontal gridline, form a group word by recording from left to right the vertical segments crossed by that gridline, in such a way that the upward segments are recorded to the first power and downward segments to the negative first power; these words form the relators for the fundamental group.  For this presentation to be compatible with our program, one must ensure each generator maps to the preferred meridian of the knot.  This is accomplished by the aforementioned orientation conventions.
	
	We obtained the arc diagram in Figure~\ref{fig:gridsofmutants} for $12_{n23}$ from the program Gridlink~\cite{Culler}, which generates the arc diagram from the braid data for knots in the repository KnotInfo~\cite{KnotInfo} (and then simplifies the diagram).  Using this arc diagram, we computed the knot group's presentation as described above.  That presentation reduces (via Tietze transformations in GAP~\cite{GAP}) to the presentation with generators $x_1, x_2, x_3, x_4$ and relators:
	\[x_4 x_2^{-1} x_4^{-1} x_1 x_2^{-1} x_4 x_2 x_1^{-1},\]
	\[x_3 x_1^{-1} x_3 x_1 x_3^{-1} x_4 x_1^{-1} x_3 x_1^{-1} x_3^{-1} x_1 x_4^{-1}, \mathrm{and}\]
	\[x_1^{-1} x_3 x_1 x_3^{-1} x_1 x_2^{-1} x_4^{-1} x_2 x_4 x_2 x_1^{-1} x_3 x_1^{-1} x_3^{-1} x_1 x_3 x_1^{-1} x_3 x_1 x_3^{-1} x_4 x_2^{-1} x_4^{-1} x_2^{-1} x_4 x_2 x_1^{-1} x_3 x_1^{-1} x_3^{-1} x_1 x_3^{-1}\]
	
	Our program computed $\delta_1(12_{n23}) = 3$ in $3435$ seconds.
\end{exam}

\begin{figure}[!ht]
	\begin{center}
		\begin{overpic}[scale=.45]
			{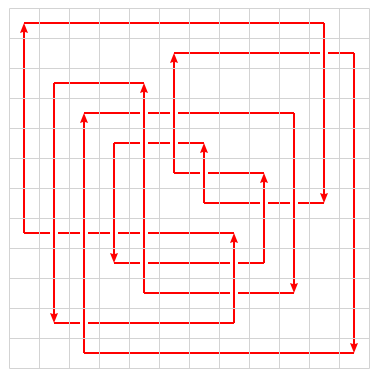}
		\end{overpic}
		\hspace{1cm}
		\begin{overpic}[scale=.45]
			{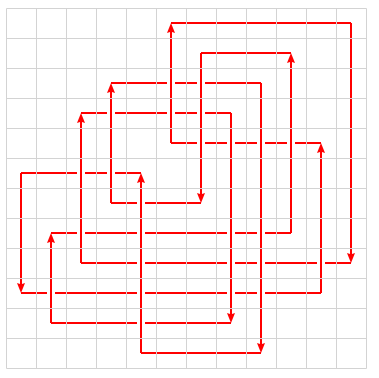}
		\end{overpic}
	\end{center}
	\caption{Arc diagrams for $12_{n23}$ and $12_{n31}$}
	\label{fig:gridsofmutants}
\end{figure}

\begin{exam}\label{ex:12n31}
	Using the arc diagram in Figure~\ref{fig:gridsofmutants} for $12_{n31}$, we found the reduced presentation with generators $x_1, x_2, x_3, x_4$ and relators:
	\[x_4 x_2 x_4^{-1} x_1 x_2^{-1} x_4^{-1} x_2 x_1^{-1},\]
	\[x_3^{-1} x_2 x_1^{-1} x_3 x_1 x_2^{-1} x_3 x_2 x_1^{-1} x_3^{-1} x_1 x_2^{-1} x_1^{-1} x_3 x_1 x_2^{-1}, \mathrm{and}\]
	\[x_3 x_4 x_2 x_4^{-1} x_2^{-1} x_4^{-1} x_2 x_1^{-1} x_3^{-1} x_1 x_3 x_4 x_2 x_4 x_2^{-1} x_4^{-1} x_3^{-1} x_1^{-1}\]
	Our program computed $\delta_1(12_{n31}) = 5$ in $7256$ seconds.
\end{exam}

\section{An implementation of the algorithm}\label{sec:implementation}

We implemented Algorithm~\ref{alg:outline} in Sage 5.3.  Most of the classes and functions are written in Python 2.7.2, and we used Macaulay2 version 1.4 to compute the Gr\"obner basis for $\azn{0}$.  Sage incorporates Python and Macaulay2, and so our program can be executed in Sage with one click of a button.  Sage~\cite{Sage}, Python~\cite{Python}, and Macaulay2~\cite{M2} are open source and cross-platform.

Our implementation can take as an input the PD code for a knot diagram or a Wirtinger presentation for the knot complement's fundamental group.  We use a script written by REU students to convert the PD code to a Wirtinger presentation~\cite{REU}.  At this point, the algorithm runs quite similarly to Example~\ref{ex:tref}.  After switching to a convenient generating set for the fundamental group, the Fox matrix is then calculated, and a presentation matrix (the abelian Fox Matrix) for $\gn{1}/\gn{2} \cong \azn{0}$ is recorded.  The abelian Fox matrix is fed to Macaulay2, which computes a Gr\"obner basis for the classical Alexander module.  Separately, the entries of the original Fox matrix are written modulo $\gn{2}$, using a splitting which treats the first generator of the Wirtinger presentation as $t$, so $G/\gn{2} \cong \gn{1}/\gn{2} \rtimes \bbz$, as in equation~\ref{eq:trefmat}.  Our program finds a column to delete by reducing the $\gn{1}/\gn{2}$-generators modulo the Gr\"obner basis computed earlier, and deletes it, yielding the metabelian Fox matrix, which presents $\an{1}$ as a $\bbk_1\left[t^{\pm 1}\right]$-module.  Finally, this metabelian Fox matrix is put in upper-triangular form by row and column operations, and the degrees of the polynomials are computed and added, giving $\delta_1(K)$.

We wrote classes for the elements of $\pi_1$, $G/\gn{2}$, $\bbz\Gamma_1$, $\bbk_1$, and $\bbk_1\left[t^{\pm 1}\right]$, and functions to add and/or multiply elements in each of those sets, which culminated in the polynomial ring operations.  Throughout the upper-triangularization process, we need the degree function in $\bbk_1\left[t^{\pm 1}\right]$, which amounts to recognizing when the coefficient of a term is zero or not, hence the importance of Section~\ref{sec:wp}.  In $\bbk_1\left[t^{\pm 1}\right]$, we need to compute quotients of polynomials on the left and right so that we can do row and column operations to upper-triangularize the metabelian Fox matrix.

We implemented group ring elements in $\bbz\left[\gn{1}/\gn{2}\right]$ as Python dictionaries.  A \emph{dictionary} is a function whose domain consists of \emph{keys}.  The range of the function is the set of \emph{values} for the dictionary.  In Python, keys must be hashable, so that dictionary lookup time (recalling the value of the function given a key) is constant time.  A group ring element in $\bbz\left[\gn{1}/\gn{2}\right]$ can then be thought of as a dictionary keyed by elements of $\gn{1}/\gn{2}$ with values in $\bbz$.  Multiplication of elements in $\bbz\left[\gn{1}/\gn{2}\right]$ follows the rule $(ax + by)(cz+dw) = ab(x+z) + ad(x+w) + bc(y+z) +bd(y+w)$ (we use here that the group $\gn{1}/\gn{2}$ is abelian).  Python dictionaries can be used to multiply and group like terms, since we have a normal form for elements of $\gn{1}/\gn{2}$.  We include the pseudo-code for our group ring multiplication function:

\begin{verbatim}
    def multiply_dictionaries(dict1, dict2):
        product = {};
        for v in keys of dict1:
            for w in keys of dict2:
                # v and w are vectors reduced by the Groebner basis GB;
                product_key = (v+w) % GB;
                product[product_key] = 0;
        for v in keys of dict1:
            for w in keys of dict2:
                product[(v+w) % GB] = product[(v+w) % GB] + dict1[v]*dict2[w];
        for  k in keys of product:
            if product[k] == 0:
                delete product[k];
        return product
\end{verbatim}

The first for loop creates all of the sums of group elements that occur in the product in the group ring.  The second for loop computes all the integer coefficients in the product. The third for loop removes all terms in the product with coefficient zero.  If $l$ is the length of the first factor and $r$ is the length of the second factor, this function runs through $3lr$ loops, at most.  The returned product is completely collapsed.  Alternatively, one could implement group ring elements as lists.  Multiplication and collapsing of lists must run through at most $lr + (lr)^2$ loops.

A Sage worksheet for computing $\delta_1$ is available at \url{https://pdhorn.expressions.syr.edu/software/}.

\bibliographystyle{amsalpha}
\bibliography{/Users/pdhorn/Documents/Research/PeterHornBib}
\end{document}